\documentclass[10pt,twoside]{article}
\usepackage{mathrsfs}
\usepackage{amssymb}
\usepackage{amsmath,color}
\usepackage[all]{xy}
\usepackage{amsthm}
\usepackage{url}

\usepackage{hyperref}
\hypersetup{colorlinks,linkcolor=blue,citecolor=blue}     %%% 为引用插入超链接

\usepackage{tikz}
\usepackage{stmaryrd}
\usepackage{xcolor}
\usetikzlibrary{arrows,calc}
\usepackage{etex}

\numberwithin{equation}{section}

\def\Ext{\mbox{\rm Ext}\,} \def\Hom{\mbox{\rm Hom}}

  \def\mod{\mbox{\rm \textbf{mod}}\,}

\def\cocone{\mbox{\rm cocone}}
\def\C{\mathcal {C}}
\def\A{\mathcal{A}} 
\def\Id{\mbox{\rm Id}\,} \def\Im{\mbox{\rm Im}\,}
\def\id{\mbox{\rm id}\,}

\def\C{\mathcal{C}}
\def\E{\mathbb{E}}

\def\del{\delta}

\newcommand{\res}{\operatorname{res.dim}}

\newcommand{\bsm}{\begin{smallmatrix}}
\newcommand{\esm}{\end{smallmatrix}} %%%%%%\bsm

\setlength{\textwidth}{160mm} \setlength{\textheight}{225mm}
\setlength{\topmargin}{-2mm} \setlength{\oddsidemargin}{0mm}\setlength{\evensidemargin}{0mm}

\usepackage{paralist}

\newtheorem{theorem}{Theorem}[section]
\newtheorem{proposition}[theorem]{Proposition}
\newtheorem{definition}[theorem]{Definition}
\newtheorem{remark}[theorem]{Remark}
\newtheorem{lemma}[theorem]{Lemma}
\newtheorem{example}[theorem]{Example}

\newtheorem{corollary}[theorem]{Corollary}
\newtheorem{condition}[theorem]{Condition}
\newtheorem{theorem*}{Theorem}

\newcommand{\add}{\operatorname{add}}

\newcommand{\gl}{\operatorname{gl.dim}}

\newcommand{\pd}{\operatorname{pd}}
\newcommand{\ra}{\rightarrow}

\newcommand{\Ker}{\operatorname{Ker}}
\newcommand{\Coker}{\operatorname{Coker}}

\newcommand{\RNum}[1]{\uppercase\expandafter{\romannumeral #1\relax}}% 罗马数字

\title{ \bf Resolving subcategories and  dimensions in recollements of extriangulated categories  \thanks{2020 Mathematics Subject Classification: 18G20, 18G10.}
\thanks{Keywords: extriangulated category, recollement, resolving subcategory, resolution dimension.
 }}
\vspace{0.2cm}
%% {\it \footnotesize 1. College of Science, Henan University of Engineering, Zhengzhou 451191, P.R. China}
\author{Xin Ma$^{1}$, Tiwei Zhao$^{2, }$\thanks{Corresponding author: tiweizhao@qfnu.edu.cn}, Xin Zhuang$^{1}$
\\
{\it \footnotesize 1. College of Science, Henan University of Engineering, Zhengzhou 451191, P.R. China}
\\
{\it \footnotesize 2. School of Mathematical Sciences, Qufu Normal University, Qufu 273165, P.R. China}
}
\date{ }
\begin{document}

\baselineskip=16pt
\maketitle

\begin{abstract}
Recently, Wang, Wei and Zhang introduced the notion of recollements of extriangulated categories. In this paper, let $(\mathcal{A},\mathcal{B},\mathcal{C})$ be a recollement of extriangulated categories.
We provide some methods to construct resolving subcategories in $(\mathcal{A},\mathcal{B},\mathcal{C})$. As applications of  the Auslander-Reiten correspondence,  we get the gluing of cotilting modules in a recollement of module categories for artin algebras.
We also give some bounds of resolution dimensions of the categories involved in $(\mathcal{A},\mathcal{B},\mathcal{C})$ with respect to resolving subcategories, which generalize some known results.
\end{abstract}

\pagestyle{myheadings}
\markboth{\rightline {\scriptsize X. Ma, T. Zhao, X. Zhuang}}%%%%%%%%%%%    %%%%%%%%    %%%%%%%%%    %%%%%%%页上角
{\leftline{\scriptsize Resolving subcategories and  dimensions in recollements of extriangulated categories}}%%%%%%%   %%%%%    %%%%%%    %%%%%%%页上角

%\section{Introduction} %delete * to number this section

%\tableofcontents

\section{Introduction}
Abelian categories and triangulated categories are two fundamental structures in algebra and geometry.
Nakaoka
and Palu \cite{Na} introduced the notion of extriangulated categories which is extracting properties on triangulated categories and exact categories (in particular, abelian categories).
%There exist extriangulated categories which are not exact
%nor triangulated, many examples can be founded in \cite{HZZ20P,INY18A, Na}.
Many
results given in exact categories and triangulated categories can be unified in the same
framework (see \cite{GNP21,HZZ21G,HZZ20P,HZZ21P,HZ21R,INY18A,Liu,ZZ20T,ZZ21G}).
Recollements of triangulated categories and abelian categories were introduced by Be{\u\i}linson, Bernstein and Deligne \cite{BBD},
which play an
important role in algebraic geometry and representation theory.
%, for instance \cite{BBD,BARI07H,CEPBSL86D,CEPBSL88F,KNJ94G}.
They are closely related each other, and possess similar properties in many aspects.
In order to give a simultaneous generalization of recollements of abelian categories and triangulated categories,
Wang, Wei and Zhang \cite{WWZ20R} introduced the notion of recollements of extriangulated categories.

Recently, gluing techniques with respect to a recollement of extriangulated categories have been investigated; for instance, He, Hu and Zhou \cite{HHZ21T} glued torsion pairs in a recollement of extriangulated categories;
Liu and Zhou \cite{LZ} glued cotorsion pairs in a recollement of extriangulated categories; etc.
These results give a simultaneous generalization of recollements of abelian categories and triangulated categories.

%also shows a new phenomena in the recollement of abelian categories.

Resolving subcategories play an important role in the study of triangulated categories and abelian categories.
In abelian categories, resolving subcategories and resolution dimensions are closely related with tilting theory (see \cite{AR}) and some homological conjectures (see \cite{H14}).
Resolving subcategories and resolution dimensions are also crucial to study the relative homological theory in triangulated categories (see \cite{MXResolving, MZ21R}).
In the situation of recollements,
 many authors studied the (relative) homological dimension in a recollement of abelian categories or triangulated categories (see \cite{GMT,HYG,PC14H,ZHJ,ZHJ1, ZJL}).

In this paper, we will provide some methods to construct  resolving subcategories, and investigate the relation of  resolution dimension with respect to resolving subcategories in a recollement of extriangulated categories.
The paper is organized as follows.

In Section 2, we summarize some basic definitions and properties of extriangulated categories, which will be used in this sequel.
In Section 3, we first recall the definition of resolving subcategories of extriangulated categories.
Latter, we give some methods to construct resolving subcategories in a recollement of extriangulated categories.
As applications, using the Auslander-Reiten correspondence (\cite{AR}),  we get the gluing of cotilting modules in a recollement of module categories for artin algebras.
In Section 4, we give some bounds for resolution dimensions with respect to resolving subcategories of the categories involved in a recollement of extriangulated categories.
Finally, in Section 5, we give some examples to illustrate the obtained results.

Throughout this paper,
all subcategories are assumed to be full, additive and closed under isomorphisms.
Let $\mathcal{C}$ be an extriangulated category, and let $\mathcal{X}$ be a class of objects of $\mathcal{C}$. We use $\add \mathcal{X}$ to
denote the subcategory of $\mathcal{C}$ consisting of
direct summands of finite direct sums of objects in $\mathcal{X}$.
Let $A$ be an artin algebra. We use $\mod A$ to denote the category of finitely generated left $A$-modules.
\section{Preliminaries}
Let us briefly recall some definitions and basic properties of extriangulated categories from \cite{Na}.
We omit some details here, but the reader can find
them in \cite{Na}.

Let $\mathcal{C}$ be an additive category equipped with an additive bifunctor
$$\mathbb{E}: \mathcal{C}^{\rm op}\times \mathcal{C}\rightarrow {\rm Ab},$$
where ${\rm Ab}$ is the category of abelian groups. For any objects $A, C\in\mathcal{C}$, an element $\delta\in \mathbb{E}(C,A)$ is called an \emph{$\mathbb{E}$-extension}.
Let $\mathfrak{s}$ be a correspondence which associates an equivalence class $$\mathfrak{s}(\delta)=\xymatrix@C=0.8cm{[A\ar[r]^x
 &B\ar[r]^y&C]}$$ to any $\mathbb{E}$-extension $\delta\in\mathbb{E}(C, A)$. This $\mathfrak{s}$ is called a {\it realization} of $\mathbb{E}$, if it makes the diagrams in \cite[Definition 2.9]{Na} commutative.
 A triplet $(\mathcal{C}, \mathbb{E}, \mathfrak{s})$ is called an {\it extriangulated category} if it satisfies the following conditions.
\begin{itemize}
\item[(1)] $\mathbb{E}\colon\mathcal{C}^{\rm op}\times \mathcal{C}\rightarrow \rm{Ab}$ is an additive bifunctor.
\item[(2)] $\mathfrak{s}$ is an additive realization of $\mathbb{E}$.
\item[(3)] $\mathbb{E}$ and $\mathfrak{s}$  satisfy the compatibility conditions in \cite[Definition 2.12]{Na}.
 \end{itemize}

Many examples can be founded in \cite{HZZ20P,INY18A,Na}.
%%Throughout, let $(\mathcal{C},\mathbb{E},\s)$ be an extriangulated category (see \cite{Na} for details).
%%$k$ be a field and $\mathcal{C}$ be a Krull-Schmidt Hom-finite, $k$-linear additive category.

%Throughout, let $k$ be a field, and Let $\mathcal{C}=(\C,\E,\s)$ be a Krull-Schmidt Hom-finite, $k$-linear extriangulated category, which satisfies some conditions $\rm (ET1)-(ET4),(ET3)^{op},(ET4)^{op}$ (see \cite{Na} for details).
%Some basic notions and basic properties of extriangulated categories can be see in \cite{Na}.
%We use the following notations (see \cite{Na})
%We use the following notations (see \cite{Na})
\begin{example}\label{example-extri}
(1) Exact categories, triangulated categories and extension-closed subcategories of triangulated categories are extriangulated categories.
Extension-closed subcategories of an extriangulated category are also extriangulated categories (see \cite[Remark 2.18]{Na}).

(2) If $\mathcal{C}$ is a triangulated category with suspension functor $\Sigma$ and
$\xi$ is a proper class of triangles (see \cite{B00R} for details), then $(\mathcal{C},\mathbb{E}_{\xi},\mathfrak{s}_{\xi})$ is an extriangulated category (see \cite[Remark 3.3]{HZZ20P}).

%(3) Let $\mathcal{C}$ be an extriangulated categories and $\mathcal{I}$ a subcategory of $\mathcal{C}$.
%If $\mathcal{I}\subseteq \mathcal{P(C)}\cap \mathcal{I(C)}$, then the ideal quotient
%$\mathcal{C}/\mathcal{I}$ has the structure of an extriangulated category.
\end{example}

We recall the following notations from \cite{Na}.

\begin{itemize}
\item[(1)] A sequence $\xymatrix@C=15pt{A\ar[r]^{x} & B \ar[r]^{y} & C}$ is called a {\it conflation} if it realizes some $\E$-extension $\del\in\E(C,A)$.
In this case, $\xymatrix@C=15pt{A\ar[r]^{x} & B}$ is called an {\it inflation} and $\xymatrix@C=15pt{B \ar[r]^{y} & C}$ is called a {\it deflation}. We call $\xymatrix@C=15pt{A\ar[r]^{x} & B \ar[r]^{y} & C\ar@{-->}[r]^{\del}&}$ an \emph{$\E$-triangle}.

\item[(2)]
Let $\mathcal{X}$ be a subcategory of $\mathcal{C}$, and let $\xymatrix@C=15pt{A\ar[r]^{x} & B \ar[r]^{y} & C\ar@{-->}[r]^{\del}&}$
be any $\mathbb{E}$-triangle.
\begin{itemize}
\item[(i)] We call $A$ the {\it cocone} of $y\xymatrix@C=15pt{\colon B\ar[r]& C},$ and denote it by ${\rm cocone}(y);$ we call $C$ the {\it cone} of $x\colon \xymatrix@C=15pt{A\ar[r]& B},$ and denote it by ${\rm cone}(x)$.
  \item[(ii)]  $\mathcal{X}$ is {\it closed under extensions} if $A, C\in \mathcal{X}$, it holds that $B\in \mathcal{X}$.
  \item[(iii)] $\mathcal{X}$ is {\it closed under cocones} (resp., {\it cones}) if $B, C\in \mathcal{X}$ (resp., $A,B\in \mathcal{X}$), it holds that $A\in \mathcal{X}$ (resp., $C\in \mathcal{X}$).
\end{itemize}
\end{itemize}

Throughout this paper,
for an extriangulated category $\mathcal{C}$, we assume the following condition, which is analogous
to the weak idempotent completeness (see \cite[Proposition 7.6]{Bu}).

\begin{condition}\label{WIC} {\rm(WIC) (see \cite[Condition 5.8]{Na})} Let $f:X\rightarrow Y$ and  $g:Y\rightarrow Z$ be any composable pair of morphisms in $\mathcal{C}$.
\begin{itemize}
\item[(1)]  If $gf$ is an inflation, then $f$ is an inflation.
\item[(2)] If $gf$ is a deflation, then $g$ is a deflation.
    \end{itemize}
\end{condition}

\begin{definition}{\rm(\cite[Definitions 3.23 and 3.25]{Na})
Let $\mathcal{C}$ be an extriangulated category.
\begin{itemize}
\item[(1)] An object $P$ in $\mathcal{C}$ is called {\em projective} if for any $\mathbb{E}$-triangle $A\stackrel{x}{\longrightarrow}B\stackrel{y}{\longrightarrow}C\stackrel{}\dashrightarrow$ and any morphism $c$ in $\mathcal{C}(P,C)$, there exists $b$ in $\mathcal{C}(P,B)$ such that $yb=c$.
We denote the full subcategory of projective objects in $\mathcal{C}$ by $\mathcal{P}(\mathcal{C})$.
Dually, the {\em injective} objects are defined, and the full subcategory of injective objects in $\mathcal{C}$ is denoted by $\mathcal{I}(\mathcal{C})$.
\item[(2)] We say that $\mathcal{C}$ {\em has enough projectives} if for any object $M\in\mathcal{C}$, there exists an $\mathbb{E}$-triangle $A\stackrel{}{\longrightarrow}P\stackrel{}{\longrightarrow}M\stackrel{}\dashrightarrow$ satisfying $P\in\mathcal{P}(\mathcal{C})$. Dually, we define that $\mathcal{C}$ {\em has enough injectives}.
\end{itemize}}
\end{definition}

\begin{remark}
$\mathcal{P(C)}$ is closed under direct summands, extensions and cocones. Dually, $\mathcal{I(C)}$ is closed under direct summands, extensions and cones.
\end{remark}

In extriangulated categories,
the notions of the left (right) exact sequences (resp., functor) and compatible morphisms can be founded in \cite[Definitions 2.9 and 2.12]{WWZ20R} for details.
Also the notion of  extriangulated (resp., exact) functor between two extriangulated categories can be found in \cite{Ben} (resp., \cite[Definition 2.13]{WWZ20R}).

Now we recall the concept of recollements of extriangulated categories \cite{WWZ20R}, which gives a simultaneous generalization of recollements of triangulated categories and abelian categories (see \cite{BBD, Fr}).

\begin{definition}\label{def-rec}{\rm(\cite[Definition 3.1]{WWZ20R}) }
Let $\mathcal{A}$, $\mathcal{B}$ and $\mathcal{C}$ be three extriangulated categories. A \emph{recollement} of $\mathcal{B}$ relative to
$\mathcal{A}$ and $\mathcal{C}$, denoted by ($\mathcal{A}$, $\mathcal{B}$, $\mathcal{C}$), is a diagram
\begin{equation}\label{recolle}
  \xymatrix{\mathcal{A}\ar[rr]|{i_{*}}&&\ar@/_1pc/[ll]|{i^{*}}\ar@/^1pc/[ll]|{i^{!}}\mathcal{B}
\ar[rr]|{j^{\ast}}&&\ar@/_1pc/[ll]|{j_{!}}\ar@/^1pc/[ll]|{j_{\ast}}\mathcal{C}}
\end{equation}
given by two exact functors $i_{*},j^{\ast}$, two right exact functors $i^{\ast}$, $j_!$ and two left exact functors $i^{!}$, $j_\ast$, which satisfies the following conditions:
\begin{itemize}
  \item [(R1)] $(i^{*}, i_{\ast}, i^{!})$ and $(j_!, j^\ast, j_\ast)$ are adjoint triples.
  \item [(R2)] $\Im i_{\ast}=\Ker j^{\ast}$.
  \item [(R3)] $i_\ast$, $j_!$ and $j_\ast$ are fully faithful.
  \item [(R4)] For each $B\in\mathcal{B}$, there exists a left exact $\mathbb{E}$-triangle sequence
$$
  \xymatrix{i_\ast i^!( B)\ar[r]^-{\theta_B}&B\ar[r]^-{\vartheta_B}&j_\ast j^\ast (B)\ar[r]&i_\ast (A)}
$$
 in $\mathcal{B}$ with $A\in \mathcal{A}$, where $\theta_B$ and  $\vartheta_B$ are given by the adjunction morphisms.
  \item [(R5)] For each $B\in\mathcal{B}$, there exists a right exact $\mathbb{E}$-triangle sequence
$$
  \xymatrix{i_\ast\ar[r]( A') &j_! j^\ast (B)\ar[r]^-{\upsilon_B}&B\ar[r]^-{\nu_B}&i_\ast i^\ast (B)&}
$$
in $\mathcal{B}$ with $A'\in \mathcal{A}$, where $\upsilon_B$ and $\nu_B$ are given by the adjunction morphisms.
\end{itemize}
\end{definition}

We collect some properties of recollements of extriangulated categories (see \cite{WWZ20R}).

\begin{lemma}\label{lem-rec} Let ($\mathcal{A}$, $\mathcal{B}$, $\mathcal{C}$) be a recollement of extriangulated categories.

$(1)$ All the natural transformations
$$i^{\ast}i_{\ast}\Rightarrow\Id_{\A},~\Id_{\A}\Rightarrow i^{!}i_{\ast},~\Id_{\C}\Rightarrow j^{\ast}j_{!},~j^{\ast}j_{\ast}\Rightarrow\Id_{\C}$$
are natural isomorphisms.
Moreover, $i^{!}$, $i^{*}$ and $j^{*}$ are dense.

$(2)$ $i^{\ast}j_!=0$ and $i^{!}j_\ast=0$.

$(3)$ $i^{\ast}$ preserves projective objects and $i^{!}$ preserves injective objects.

$(3')$ $j_{!}$ preserves projective objects and $j_{\ast}$ preserves injective objects.

$(4)$ If $i^{!}$ (resp., $j_{\ast}$) is  exact, then $i_{\ast}$ (resp., $j^{\ast}$) preserves projective objects.

$(4')$ If $i^{\ast}$ (resp., $j_{!}$) is  exact, then $i_{\ast}$ (resp., $j^{\ast}$) preserves injective objects.

$(5)$ If $\mathcal{B}$ has enough projectives, then $\mathcal{A}$ has enough projectives and $\mathcal{P(A)}=\add i^{*}(\mathcal{P(B)})$;
In addition, if $j^{*}$ preserves projectives, then $\mathcal{C}$ has enough projectives and
$\mathcal{P(C)}=\add j^{*}(\mathcal{P(B)})$.

$(6)$ If $i^{!}$ is exact, then $j_{\ast}$ is exact.

$(6')$
If $i^{\ast}$ is exact, then $j_{!}$ is  exact.

$(7)$ If $i^{!}$ is exact, for each $B\in\mathcal{B}$, there is an $\mathbb{E}$-triangle
  \begin{equation*}\label{third}
  \xymatrix{i_\ast i^! (B)\ar[r]^-{\theta_B}&B\ar[r]^-{\vartheta_B}&j_\ast j^\ast (B)\ar@{-->}[r]&}
   \end{equation*}
 in $\mathcal{B}$ where $\theta_B$ and  $\vartheta_B$ are given by the adjunction morphisms.

$(7')$ If $i^{\ast}$ is exact, for each $B\in\mathcal{B}$, there is an $\mathbb{E}$-triangle
  \begin{equation*}\label{four}
  \xymatrix{ j_! j^\ast (B)\ar[r]^-{\upsilon_B}&B\ar[r]^-{\nu_B}&i_\ast i^\ast (B) \ar@{-->}[r]&}
   \end{equation*}
in $\mathcal{B}$ where $\upsilon_B$ and $\nu_B$ are given by the adjunction morphisms.
\end{lemma}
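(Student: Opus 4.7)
The plan is to verify the ten assertions in the order stated, since most later parts depend on earlier ones. The recurring tools are the adjunctions of (R1), fully-faithfulness from (R3), the kernel identity $\Im i_{*}=\Ker j^{*}$ from (R2), and the $\mathbb{E}$-triangle sequences of (R4)--(R5).

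Part (1) is formal adjunction: the four natural transformations are precisely the (co)units of the adjoint pairs involving $i_{*}$, $j_{!}$, $j_{*}$, and a standard categorical fact says that a (co)unit is an isomorphism iff the corresponding adjoint is fully faithful (supplied by (R3)); density of $i^{*}, i^{!}, j^{*}$ is then immediate from $A\cong i^{*}i_{*}A\cong i^{!}i_{*}A$ for $A\in\mathcal{A}$ and $C\cong j^{*}j_{!}C$ for $C\in\mathcal{C}$. For (2), combine (R2) with an adjunction, e.g.\ $\mathcal{A}(i^{*}j_{!}C, A)\cong\mathcal{C}(C, j^{*}i_{*}A)=0$, and invoke Yoneda; $i^{!}j_{*}=0$ is dual. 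Parts (3)--(4') all follow a uniform pattern: to show a functor $F$ preserves projectives (resp.\ injectives), rewrite the required lifting property through an adjoint isomorphism $\mathcal{Y}(FP,-)\cong\mathcal{X}(P, G-)$ and verify that $G$ sends the relevant $\mathbb{E}$-triangle to the same, which is exactly the stated exactness of $G$.

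For (5), given $A\in\mathcal{A}$, pick a deflation $P\to i_{*}A$ with $P\in\mathcal{P}(\mathcal{B})$ and apply the right exact $i^{*}$ to get a deflation $i^{*}P\to i^{*}i_{*}A\cong A$ with $i^{*}P\in\mathcal{P}(\mathcal{A})$ by (3); this supplies enough projectives, and any projective $Q\in\mathcal{A}$ splits off such a deflation, yielding $\mathcal{P}(\mathcal{A})=\add i^{*}(\mathcal{P}(\mathcal{B}))$. The $\mathcal{C}$-statement is identical with $j^{*}$ replacing $i^{*}$. Parts (7), (7') follow by applying the now-exact $i^{!}$ (resp.\ $i^{*}$) to the left (resp.\ right) exact $\mathbb{E}$-triangle sequence of (R4) (resp.\ (R5)) and using (1), (2): the fourth term becomes $i^{!}i_{*}A\cong A$ (resp.\ $A'$), the first morphism becomes the identity under $i^{!}i_{*}\cong\Id_{\mathcal{A}}$, and the middle term $i^{!}j_{*}j^{*}B$ vanishes by (2); chasing these through the definition of a left/right exact $\mathbb{E}$-triangle sequence forces $A=0$ (resp.\ $A'=0$), so the sequence collapses to a genuine $\mathbb{E}$-triangle. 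Parts (6), (6') are then obtained by combining (7) (resp.\ (7')) with (R5) (resp.\ (R4)) and an (ET4)-style octahedron to transport an $\mathbb{E}$-triangle in $\mathcal{C}$ across $j_{*}$ (resp.\ $j_{!}$) to one in $\mathcal{B}$.

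The main obstacle I anticipate is (6), (6'): converting exactness of one adjoint into exactness of a \emph{different} adjoint on the opposite side of the recollement genuinely requires gluing an $\mathbb{E}$-triangle in $\mathcal{B}$ from pieces living in $\mathcal{C}$, which is where axiom (ET4) together with the compatibility conditions embedded in (R4)--(R5) do the real work. Every other item reduces either to formal adjunction manipulations or to a direct invocation of the $\mathbb{E}$-triangle sequences of (R4)--(R5).
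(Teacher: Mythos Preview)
The paper does not actually prove this lemma: it is stated with a citation to \cite{WWZ20R}, where the results are established. Your outline is essentially the standard route taken there, and items (1)--(5), (7), (7') are handled correctly.

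Your treatment of (6), (6') is the only soft spot. Invoking an ``(ET4)-style octahedron'' and pairing (7) with (R5) (resp.\ (7') with (R4)) is more machinery than needed, and it is not clear from your sketch how that combination would produce exactness of $j_{*}$ (resp.\ $j_{!}$). A cleaner argument, once (7) is in hand, runs as follows for (6): given an $\mathbb{E}$-triangle $X\to Y\to Z$ in $\mathcal{C}$, left exactness of $j_{*}$ yields an $\mathbb{E}$-triangle $j_{*}X\to j_{*}Y\to W$ in $\mathcal{B}$ together with a compatible morphism $W\to j_{*}Z$. Applying the exact functor $j^{*}$ gives $j^{*}W\cong Z$, while applying the exact $i^{!}$ (and using $i^{!}j_{*}=0$) gives $i^{!}W=0$. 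Now (7) applied to $W$ yields the $\mathbb{E}$-triangle $0\cong i_{*}i^{!}W\to W\to j_{*}j^{*}W\cong j_{*}Z$, so $W\cong j_{*}Z$ via the adjunction unit, which one checks agrees with the given compatible morphism. The argument for (6') is dual, using (7') and exactness of $i^{*}$. No octahedral axiom is required.
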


\section{Gluing resolving subcategories in a recollement}

Throughout this paper, we will always assume that all extriangulated categories admit enough projective objects and injective objects.   % and \ref{Pmonic}.

Now we recall the notion of resolving subcategory in an extriangulated category, which gives a simultaneous generalization of  resolving subcategories in an abelian category (see \cite{AB69S}) and a triangulated category with a proper class $\xi$ of triangles (see \cite{MZ21R}).

\begin{definition} {\rm (cf. \cite[Page 243]{ZZ20T})
Let $\mathcal{C}$ be an extriangulated category and $\mathcal{X}$ a subcategory of $\mathcal{C}$. Then $\mathcal{X}$ is called a
{\it resolving} subcategory of $\mathcal{C}$ if the following conditions are satisfied.
\begin{itemize}
\item[(1)] $\mathcal{P(C)}\subseteq\mathcal{X}$.
\item[(2)] $\mathcal{X}$ is closed under extensions.
\item[(3)] $\mathcal{X}$ is closed under cocones.
\end{itemize}}
\end{definition}

Clearly,
$\mathcal{C}$ and $\mathcal{P(C)}$ are trivial resolving subcategories of $\mathcal{C}$.

\begin{remark}
\begin{itemize}
\item[(1)] If $\mathcal{C}$ is an abelian category, the resolving subcategory defined  as above coincide
with the earlier one given by Zhu in \cite{ZXS13R}.
\item[(2)]
If $\mathcal{C}$ is a triangulated category with a proper class $\xi$ of triangles, the resolving subcategory defined as above coincide
with the earlier one given by Ma and Zhao in \cite{MZ21R}
\end{itemize}
\end{remark}

From now on, let $(\mathcal{A},\mathcal{B},\mathcal{C})$ be a recollement of extriangulated categories.
The following result gives a method to glue a resolving subcategory in $\mathcal{B}$ from resolving subcategories in $\mathcal{A}$ and $\mathcal{C}$.
\begin{theorem}\label{main-resolving}
Let $(\mathcal{A},\mathcal{B},\mathcal{C})$ be a recollement of extriangulated categories as the diagram (\ref{recolle}).
Assume that $\mathcal{X'}$ and $\mathcal{X''}$ are resolving subcategories of $\mathcal{A}$ and $\mathcal{C}$ respectively. If $i^{*}$ is exact and $j^{*}$ preserves projective objects, then
$$\mathcal{X}:=\{B\in \mathcal{B}\mid i^{*}(B)\in \mathcal{X'}\ \text{and} \ j^{*}(B)\in \mathcal{X''}\}$$
 is a resolving subcategory of $\mathcal{B}$.
 In particular, we have
 \begin{itemize}
 \item[(1)] $i_{*}(\mathcal{X'})\subseteq \mathcal{X}$ and $j_{!}(\mathcal{X''})\subseteq \mathcal{X}$.
 \item[(2)] $i^{*}(\mathcal{X})=\mathcal{X'}$ and $j^{*}(\mathcal{X})=\mathcal{X''}$.
 \end{itemize}
\end{theorem}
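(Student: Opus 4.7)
The plan is to verify the three axioms of a resolving subcategory for $\mathcal{X}$ in turn, using the exactness of $i^{*}$ (hypothesis) and $j^{*}$ (built into the definition of a recollement) to push $\mathbb{E}$-triangles of $\mathcal{B}$ through to $\mathbb{E}$-triangles in $\mathcal{A}$ and $\mathcal{C}$, and then read off the conclusions from the fact that $\mathcal{X}'$ and $\mathcal{X}''$ are resolving in those categories.

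First I would check that $\mathcal{P}(\mathcal{B})\subseteq\mathcal{X}$. For $P\in\mathcal{P}(\mathcal{B})$, Lemma \ref{lem-rec}(3) gives $i^{*}(P)\in\mathcal{P}(\mathcal{A})\subseteq\mathcal{X}'$, while the hypothesis that $j^{*}$ preserves projectives together with $\mathcal{P}(\mathcal{C})\subseteq\mathcal{X}''$ gives $j^{*}(P)\in\mathcal{X}''$. Next, for closure under extensions and cocones: take any $\mathbb{E}$-triangle $B_{1}\to B_{2}\to B_{3}\dashrightarrow$ in $\mathcal{B}$. Applying the exact functors $i^{*}$ and $j^{*}$ produces $\mathbb{E}$-triangles $i^{*}(B_{1})\to i^{*}(B_{2})\to i^{*}(B_{3})\dashrightarrow$ and $j^{*}(B_{1})\to j^{*}(B_{2})\to j^{*}(B_{3})\dashrightarrow$. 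If $B_{1},B_{3}\in\mathcal{X}$ then extension-closure of $\mathcal{X}'$ and $\mathcal{X}''$ forces $B_{2}\in\mathcal{X}$; if $B_{2},B_{3}\in\mathcal{X}$ then cocone-closure of $\mathcal{X}'$ and $\mathcal{X}''$ forces $B_{1}\in\mathcal{X}$.

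For the supplementary statements, the inclusions $i_{*}(\mathcal{X}')\subseteq\mathcal{X}$ and $j_{!}(\mathcal{X}'')\subseteq\mathcal{X}$ reduce immediately to applying Lemma \ref{lem-rec}(1),(2): for $X'\in\mathcal{X}'$ we have $i^{*}i_{*}(X')\cong X'\in\mathcal{X}'$ and $j^{*}i_{*}(X')=0$, and dually for $j_{!}$. Here I need $0\in\mathcal{X}'\cap\mathcal{X}''$, which follows from $\mathcal{P}(\mathcal{A})\subseteq\mathcal{X}'$ and $\mathcal{P}(\mathcal{C})\subseteq\mathcal{X}''$ since the zero object is projective. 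The equalities $i^{*}(\mathcal{X})=\mathcal{X}'$ and $j^{*}(\mathcal{X})=\mathcal{X}''$ then follow: the $\subseteq$ direction is the definition of $\mathcal{X}$, while the $\supseteq$ direction uses $i_{*}(\mathcal{X}')\subseteq\mathcal{X}$ with $i^{*}i_{*}\cong\Id_{\mathcal{A}}$, and $j_{!}(\mathcal{X}'')\subseteq\mathcal{X}$ with $j^{*}j_{!}\cong\Id_{\mathcal{C}}$.

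I do not anticipate a genuinely hard step: the argument is essentially bookkeeping, with the only subtle point being that exactness of $i^{*}$ (assumption) combined with exactness of $j^{*}$ (from the recollement datum) is precisely what is needed to preserve $\mathbb{E}$-triangles, and that the extra hypothesis ``$j^{*}$ preserves projectives'' is only invoked in the verification of axiom (1). Implicit in the whole setup is the convention that all subcategories are closed under isomorphisms, which makes the ``$i^{*}i_{*}\cong\Id$'' and ``$j^{*}j_{!}\cong\Id$'' identifications from Lemma \ref{lem-rec}(1) sufficient in place of strict equalities.
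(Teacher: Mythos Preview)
Your proposal is correct and follows essentially the same route as the paper: verify $\mathcal{P}(\mathcal{B})\subseteq\mathcal{X}$ via the projective-preservation of $i^{*}$ and $j^{*}$, push $\mathbb{E}$-triangles through the exact functors $i^{*}$ and $j^{*}$ to inherit extension- and cocone-closure from $\mathcal{X}'$ and $\mathcal{X}''$, and then read off (1) and (2) from the unit/counit isomorphisms together with the vanishings $j^{*}i_{*}=0$ and $i^{*}j_{!}=0$. One small citation slip: the vanishing $j^{*}i_{*}=0$ is not part of Lemma~\ref{lem-rec}(2) (which records $i^{*}j_{!}=0$ and $i^{!}j_{*}=0$) but comes directly from axiom (R2), $\operatorname{Im} i_{*}=\operatorname{Ker} j^{*}$.
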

\begin{proof}
Since $i^{*}$ preserves projective objects by Lemma \ref{lem-rec} and $j^{*}$ preserves projective objects by assumption,
we have $\mathcal{P(B)}\subseteq \mathcal{X}$.

Let $\xymatrix@C=0.5cm{X\ar[r]&Y\ar[r]&Z\ar@{-->}[r]&}$ be an $\mathbb{E}$-triangle in $\mathcal{B}$.
Applying the exact functors $i^{*}$ and $j^{*}$ to the above $\mathbb{E}$-triangle yields the following $\mathbb{E}$-triangles
$$\xymatrix{i^{*}(X)\ar[r]&i^{*}(Y)\ar[r]&i^{*}(Z)\ar@{-->}[r]&}$$
and
$$\xymatrix{j^{*}(X)\ar[r]&j^{*}(Y)\ar[r]&j^{*}(Z)\ar@{-->}[r]&}.$$

If $X$ and $Z$ are in $\mathcal{X}$, then $i^{*}(X)\in \mathcal{X'}$, $i^{*}(Z)\in \mathcal{X'}$, $j^{*}(X)\in \mathcal{X''}$, $j^{*}(Z)\in \mathcal{X''}$.
Notice that $\mathcal{X'}$ and $\mathcal{X''}$ are closed under extensions, so $i^{*}(Y)\in \mathcal{X'}$ and $j^{*}(Y)\in \mathcal{X''}$, then $Y\in \mathcal{X}$ and $\mathcal{X}$ is closed under extensions.

If $Y$ and $Z$ are in $\mathcal{X}$, then $i^{*}(Y)\in \mathcal{X'}$, $i^{*}(Z)\in \mathcal{X'}$, $j^{*}(Y)\in \mathcal{X''}$, $j^{*}(Z)\in \mathcal{X''}$.
Notice that $\mathcal{X'}$ and $\mathcal{X''}$ are closed under cocones, so $i^{*}(X)\in \mathcal{X'}$ and $j^{*}(X)\in \mathcal{X''}$, then $X\in \mathcal{X}$ and $\mathcal{X}$ is closed under cocones.

Thus $\mathcal{X}$ is a resolving subcategory of $\mathcal{B}$.

In particular,

(1) Since $i^{*}i_{*}\cong \Id_{\mathcal{A}}$ by Lemma \ref{lem-rec} and $j^{*}i_{*}=0$ by assumption, $i_{*}(\mathcal{X'})\in \mathcal{X}$.
Since $j^{*}j_{!}\cong \Id_{\mathcal{C}}$ and $i^{*}j_{!}=0$ by Lemma \ref{lem-rec}, $j_{!}(\mathcal{X''})\in \mathcal{X}$.

(2) By (1), we have $\mathcal{X'}\cong i^{*}i_{*}(\mathcal{X'})\subseteq i^{*}(\mathcal{X})$.
Notice that $i^{*}(\mathcal{X})\subseteq \mathcal{X'}$ is obvious.
Thus $i^{*}(\mathcal{X})=\mathcal{X'}$.
Similarly, we have $j^{*}(\mathcal{X})=\mathcal{X''}$.
\end{proof}

On the other hand, the following result gives a method to construct resolving subcategories in $\mathcal{A}$ and $\mathcal{B}$ from a resolving subcategory in $\mathcal{C}$.

\begin{theorem}\label{main-resolving-conve}
Let $(\mathcal{A},\mathcal{B},\mathcal{C})$ be a recollement of extriangulated categories as the diagram (\ref{recolle}).
Assume that $\mathcal{X}$ is a resolving subcategory of $\mathcal{B}$. Set $\mathcal{X}'=\add i^{*}(\mathcal{X})$, $\mathcal{X}''=\add j^{*}(\mathcal{X})$. Then we have the following statements.
\begin{itemize}
\item[(1)] If $i_{*}(\mathcal{X}')\subseteq \mathcal{X}$, then $\mathcal{X}'$ is a resolving subcategory of $\mathcal{A}$.
\item[(2)] If $j_{!}(\mathcal{X}'')\subseteq \mathcal{X}$, $j_{!}$ is exact and $j^{*}$ preserves projectives, then $\mathcal{X}''$ is a resolving subcategory of $\mathcal{C}$.
\item[(3)] If $i_{*}(\mathcal{X}')\subseteq \mathcal{X}$, $j_{!}(\mathcal{X}'')\subseteq \mathcal{X}$ and  $i^{*}$ is exact, then
$$\mathcal{X}=\{B\in \mathcal{B}\mid i^{*}(B)\in \mathcal{X}'\ \text{and} \ j^{*}(B)\in \mathcal{X}''\}.$$
\end{itemize}
\end{theorem}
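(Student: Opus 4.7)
The plan is to verify each of (1), (2), (3) by unwinding the definition of resolving subcategory and exploiting the adjunctions together with the natural isomorphisms $i^{*}i_{*}\cong\Id_{\A}$ and $j^{*}j_{!}\cong\Id_{\C}$ from Lemma~\ref{lem-rec}(1). The strategy in (1) and (2) is the same: push an $\mathbb{E}$-triangle in $\A$ (resp.\ $\C$) up into $\B$ by the exact functor $i_{*}$ (resp.\ $j_{!}$), apply the hypothesis $i_{*}(\X')\subseteq\X$ (resp.\ $j_{!}(\X'')\subseteq\X$), use that $\X$ is resolving in $\B$ to conclude the middle/outer term lies in $\X$, and then pull back to $\A$ (resp.\ $\C$) via $i^{*}$ (resp.\ $j^{*}$) using the natural isomorphism to land in $\add i^{*}(\X)=\X'$ (resp.\ $\add j^{*}(\X)=\X''$).

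For (1), I will first show $\mathcal{P}(\A)\subseteq\X'$: by Lemma~\ref{lem-rec}(5), $\mathcal{P}(\A)=\add i^{*}(\mathcal{P}(\B))$, and since $\mathcal{P}(\B)\subseteq\X$ this gives $\mathcal{P}(\A)\subseteq\add i^{*}(\X)=\X'$. For closure under extensions, given an $\mathbb{E}$-triangle $A_{1}\to A_{2}\to A_{3}\dashrightarrow$ in $\A$ with $A_{1},A_{3}\in\X'$, the exactness of $i_{*}$ produces an $\mathbb{E}$-triangle $i_{*}(A_{1})\to i_{*}(A_{2})\to i_{*}(A_{3})\dashrightarrow$ in $\B$ whose outer terms are in $\X$ by hypothesis, so $i_{*}(A_{2})\in\X$, whence $A_{2}\cong i^{*}i_{*}(A_{2})\in i^{*}(\X)\subseteq\X'$. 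Closure under cocones is identical, substituting ``closed under cocones'' for $\X$ in the last step. Part (2) proceeds verbatim, with $j_{!}$ (exact by hypothesis) replacing $i_{*}$; the only extra ingredient is using the assumption that $j^{*}$ preserves projectives, together with Lemma~\ref{lem-rec}(5), to obtain $\mathcal{P}(\C)=\add j^{*}(\mathcal{P}(\B))\subseteq\X''$.

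For (3), the inclusion $\X\subseteq\{B\mid i^{*}(B)\in\X',\ j^{*}(B)\in\X''\}$ is immediate from the definition of $\X'$ and $\X''$. For the reverse inclusion, given $B\in\B$ with $i^{*}(B)\in\X'$ and $j^{*}(B)\in\X''$, I will invoke Lemma~\ref{lem-rec}(7$'$): since $i^{*}$ is exact, there is an $\mathbb{E}$-triangle
\[
  j_{!}j^{*}(B)\longrightarrow B\longrightarrow i_{*}i^{*}(B)\dashrightarrow
\]
in $\B$. The outer terms lie in $j_{!}(\X'')\subseteq\X$ and $i_{*}(\X')\subseteq\X$ respectively by hypothesis, so closure of $\X$ under extensions forces $B\in\X$.

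The only slightly delicate point, and the step I expect to be the main obstacle, is that in (1) and (2) we obtain $i_{*}(A_{2})\in\X$ (resp.\ $j_{!}(C_{2})\in\X$) rather than $A_{2}$ itself being of the form $i^{*}(X)$ for some $X\in\X$; here the natural isomorphisms $i^{*}i_{*}\cong\Id_{\A}$ and $j^{*}j_{!}\cong\Id_{\C}$ from Lemma~\ref{lem-rec}(1) are exactly what is needed to re-express $A_{2}$ as a member of $i^{*}(\X)\subseteq\X'$ (and analogously for $\X''$). With this move the verification of the resolving axioms is routine.
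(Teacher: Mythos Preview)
Your proposal is correct and follows essentially the same approach as the paper's proof: in (1) and (2) you push $\mathbb{E}$-triangles into $\mathcal{B}$ via the exact functors $i_{*}$ (resp.\ $j_{!}$), use the resolving properties of $\mathcal{X}$, and pull back via the natural isomorphisms $i^{*}i_{*}\cong\Id_{\A}$ and $j^{*}j_{!}\cong\Id_{\C}$; in (3) you use the $\mathbb{E}$-triangle of Lemma~\ref{lem-rec}(7$'$) and closure under extensions. The handling of projectives via Lemma~\ref{lem-rec}(5) is likewise identical to the paper's argument.
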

\begin{proof}
(1) Since $\mathcal{P(B)}\subseteq \mathcal{X}$ by assumption, we have $i^{*}(\mathcal{P(B)})\subseteq  i^{*}(\mathcal{X})$. Moreover,  by Lemma \ref{lem-rec}, $\mathcal{P(A)}=\add i^{*}(\mathcal{P(B)})$, thus $\mathcal{P(A)}\subseteq \mathcal{X}'$.

Let
$$\xymatrix{X\ar[r]&Y\ar[r]&Z\ar@{-->}[r]&}$$
be an $\mathbb{E}$-triangle in $\mathcal{A}$.
Applying the exact functor $i_{*}$ to the above $\mathbb{E}$-triangle yields the following $\mathbb{E}$-triangle
$$\xymatrix{i_{*}(X)\ar[r]&i_{*}(Y)\ar[r]&i_{*}(Z)\ar@{-->}[r]&}$$
in  $\mathcal{B}$.

If $X,Z\in \mathcal{X}'$, since $i_{*}(\mathcal{X}')\subseteq\mathcal{X}$ by assumption and $\mathcal{X}$ is closed under extensions,
$i_{*}(Y)\in \mathcal{X}$.
So $Y\cong i^{*}i_{*}(Y)\in i^{*}(\mathcal{X})$, and thus $Y\in\mathcal{X}'$.
Then $\mathcal{X}'$ is closed under extensions.

If $Y,Z\in \mathcal{X}'$,
since $i_{*}(\mathcal{X}')\subseteq\mathcal{X}$ by assumption and $\mathcal{X}$ is closed under cocones, $i_{*}(X)\in \mathcal{X}$.
So $X\cong i^{*}i_{*}(X)\in i^{*}(\mathcal{X})$, and thus $X\in\mathcal{X}'$.
Then $\mathcal{X}'$ is closed under cocones.

Thus $i^{*}(\mathcal{X})$ is a resolving subcategory of $\mathcal{A}$.

(2)
Since $j^{*}$ preserves projectives by assumption, we have $\mathcal{P(C)}=\add j^{*}(\mathcal{P(B)})$ by Lemma \ref{lem-rec}.   Moreover, since $\mathcal{P(B)}\subseteq \mathcal{X}$, we have $j^{*}(\mathcal{P(B)})\subseteq j^{*}(\mathcal{X})$, and hence $\mathcal{P(C)}\subseteq \mathcal{X}''$.

Let
$$\xymatrix{X\ar[r]&Y\ar[r]&Z\ar@{-->}[r]&}$$
be an $\mathbb{E}$-triangle in $\mathcal{C}$.
Applying the exact functor $j_{!}$ to the above $\mathbb{E}$-triangle yields the following $\mathbb{E}$-triangle
$$\xymatrix{j_{!}(X)\ar[r]&j_{!}(Y)\ar[r]&j_{!}(Z)\ar@{-->}[r]&}$$
in $\mathcal{B}$.

Assume $X,Z\in \mathcal{X}''$. Since $j_{!}(\mathcal{X}'')\subseteq\mathcal{X}$ by assumption and $\mathcal{X}$ is closed under extensions,
$j_{!}(Y)\in \mathcal{X}$.
So $Y\cong j^{*}j_{!}(Y) \in j^{*}(\mathcal{X})$, and hence $Y\in \mathcal{X}''$.
Then $\mathcal{X}''$ is closed under extensions.

Assume $Y,Z\in \mathcal{X}''$.
Since $j_{!}(\mathcal{X}'')\subseteq\mathcal{X}$ by assumption and $\mathcal{X}$ is closed under cocones, $j_{!}(X)\in \mathcal{X}$.
So $X\cong j^{*}j_{!}(X) \in j^{*}(\mathcal{X})$, and hence $X\in\mathcal{X}''$.
Then $\mathcal{X}''$ is closed under cocones.

Thus $\mathcal{X}''$ is a resolving subcategory of $\mathcal{C}$.

(3) It is obvious that
 $$\mathcal{X}\subseteq \{B\in \mathcal{B}\mid i^{*}(B)\in \mathcal{X}'\ \text{and} \ j^{*}(X)\in \mathcal{X}''\}.$$
We only need to prove the contrary.
Let $B\in\mathcal{B}$ such that $ i^{*}(B)\in \mathcal{X}'$ and $ j^{*}(B)\in \mathcal{X}''$.
Since $i^{*}$ is exact by assumption, by Lemma \ref{lem-rec}, there exists an $\mathbb{E}$-triangle
$$\xymatrix@C=15pt{j_{!}j^{*}(B)\ar[r]&B\ar[r]&i_{*}i^{*}(B)
\ar@{-->}[r]&}$$
in $\mathcal{B}$.
Notice that $i_{*}i^{*}(B)\in i_{*}(\mathcal{X}')\subseteq \mathcal{X}$ and $j_{!}j^{*}(B)\in j_{!}(\mathcal{X}'')\subseteq\mathcal{X}$, so $B\in \mathcal{X}$ by the fact that $\mathcal{X}$ is closed under extensions.
Thus $$\{B\in \mathcal{B}\mid i^{*}(B)\in \mathcal{X}'\ \text{and} \ j^{*}(B)\in \mathcal{X}''\}\subseteq\mathcal{X}$$
as desired.
\end{proof}

Also, we have the following result.
\begin{proposition}\label{prop-conve}
Let $(\mathcal{A},\mathcal{B},\mathcal{C})$ be a recollement of extriangulated categories as the diagram (\ref{recolle}), and let
$\mathcal{X}$ be a resolving subcategory of $\mathcal{B}$. Set $\mathcal{X}''=\add j^{*}(\mathcal{X})$. If $j_{*}(\mathcal{X}'')\subseteq \mathcal{X}$ and $j_{*}$ is exact, then $\mathcal{X}''$ is a resolving subcategory of $\mathcal{C}$.
\end{proposition}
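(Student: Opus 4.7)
The plan is to verify the three defining properties of a resolving subcategory for $\mathcal{X}''$ in $\mathcal{C}$: that $\mathcal{P}(\mathcal{C}) \subseteq \mathcal{X}''$, that $\mathcal{X}''$ is closed under extensions, and that $\mathcal{X}''$ is closed under cocones. Note that $\mathcal{X}'' = \add j^{*}(\mathcal{X})$ is already an additive subcategory closed under direct summands and isomorphisms by construction.

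First I would handle the containment $\mathcal{P}(\mathcal{C}) \subseteq \mathcal{X}''$. Unlike the argument in Theorem \ref{main-resolving-conve}(2), we do not have the hypothesis that $j^{*}$ preserves projectives, so Lemma \ref{lem-rec}(5) is not directly available. Instead, I would appeal to Lemma \ref{lem-rec}(3$'$), which always gives that $j_{!}$ preserves projectives: for any $P \in \mathcal{P}(\mathcal{C})$ we have $j_{!}(P) \in \mathcal{P}(\mathcal{B}) \subseteq \mathcal{X}$, and then the canonical isomorphism $j^{*}j_{!} \cong \Id_{\mathcal{C}}$ from Lemma \ref{lem-rec}(1) yields $P \cong j^{*}j_{!}(P) \in j^{*}(\mathcal{X}) \subseteq \mathcal{X}''$.

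For closure under extensions and cocones, I would use $j_{*}$ rather than $j_{!}$, exploiting the hypothesis that $j_{*}$ is exact. Given any $\mathbb{E}$-triangle $\xymatrix@C=12pt{X\ar[r] & Y\ar[r] & Z\ar@{-->}[r]&}$ in $\mathcal{C}$, applying $j_{*}$ produces an $\mathbb{E}$-triangle $\xymatrix@C=12pt{j_{*}(X)\ar[r] & j_{*}(Y)\ar[r] & j_{*}(Z)\ar@{-->}[r]&}$ in $\mathcal{B}$. If $X, Z \in \mathcal{X}''$, then by the hypothesis $j_{*}(\mathcal{X}'') \subseteq \mathcal{X}$ both end terms lie in $\mathcal{X}$; since $\mathcal{X}$ is closed under extensions, $j_{*}(Y) \in \mathcal{X}$, and then $Y \cong j^{*}j_{*}(Y) \in j^{*}(\mathcal{X}) \subseteq \mathcal{X}''$ by Lemma \ref{lem-rec}(1). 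If instead $Y, Z \in \mathcal{X}''$, the same procedure together with closure of $\mathcal{X}$ under cocones gives $j_{*}(X) \in \mathcal{X}$, hence $X \cong j^{*}j_{*}(X) \in \mathcal{X}''$.

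The only point that requires any real care is the first one, namely getting $\mathcal{P}(\mathcal{C})$ into $\mathcal{X}''$ without the assumption that $j^{*}$ preserves projectives; the trick is to route the argument through $j_{!}$ (which always preserves projectives) and then use $j^{*}j_{!} \cong \Id_{\mathcal{C}}$. The remaining verifications are formally parallel to parts of Theorem \ref{main-resolving-conve}(2) with $j_{!}$ replaced by $j_{*}$.
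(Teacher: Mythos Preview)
Your proof is correct and the verification of closure under extensions and cocones is identical to the paper's. The one place you diverge is in showing $\mathcal{P}(\mathcal{C})\subseteq\mathcal{X}''$: you route through $j_!$ (which always preserves projectives) and use $j^{*}j_{!}\cong\Id_{\mathcal{C}}$, whereas the paper observes that the hypothesis ``$j_{*}$ is exact'' already forces $j^{*}$ to preserve projectives by Lemma~\ref{lem-rec}(4), and then applies Lemma~\ref{lem-rec}(5) to get $\mathcal{P}(\mathcal{C})=\add j^{*}(\mathcal{P}(\mathcal{B}))\subseteq\add j^{*}(\mathcal{X})=\mathcal{X}''$. So your remark that ``we do not have the hypothesis that $j^{*}$ preserves projectives'' is slightly off---it is not assumed directly, but it follows from the exactness of $j_{*}$. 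Your alternative via $j_!$ is arguably cleaner since it uses one fewer implication and does not need the exactness of $j_{*}$ for that particular step; the paper's route has the minor advantage of keeping the whole argument on the $j_{*}$ side.
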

\begin{proof}
Since $j_{*}$ is exact by assumption, $j^{*}$ preserves projectives by Lemma \ref{lem-rec}, and hence $\mathcal{P(C)}=\add j^{*}(\mathcal{P(B)})$.
Notice that $\mathcal{P(B)}\subseteq \mathcal{X}$, so $ j^{*}(\mathcal{P(B)})\subseteq j^{*}(\mathcal{X})$, and thus $\mathcal{P(C)}\subseteq \mathcal{X}''$.

Let
$$\xymatrix{X\ar[r]&Y\ar[r]&Z\ar@{-->}[r]&}$$
be an $\mathbb{E}$-triangle in $\mathcal{C}$.
Applying the exact functor $j_{*}$ to the above $\mathbb{E}$-triangle yields the following $\mathbb{E}$-triangle
$$\xymatrix{j_{*}(X)\ar[r]&j_{*}(Y)\ar[r]&j_{*}(Z)\ar@{-->}[r]&}$$
in $\mathcal{B}$.

Assume $X,Z\in \mathcal{X}''$, since $j_{*}(\mathcal{X}'')\subseteq\mathcal{X}$ by assumption and $\mathcal{X}$ is closed under extensions,
$j_{*}(Y)\in \mathcal{X}$.
So $Y\cong j^{*}j_{*}(Y) \in j^{*}(\mathcal{X})$, and hence $Y\in\mathcal{X}''$.
Then $\mathcal{X}''$ is closed under extensions.

Assume $Y,Z\in \mathcal{X}''$,
since $j_{*}(\mathcal{X}'')\subseteq\mathcal{X}$ by assumption and $\mathcal{X}$ is closed under cocones, $j_{*}(X)\in \mathcal{X}$.
So $X\cong j^{*}j_{*}(X) \in j^{*}(\mathcal{X})$, and hence $X\in \mathcal{X}''$.
Then $\mathcal{X}''$ is closed under cocones.

Thus $\mathcal{X}''$ is a resolving subcategory of $\mathcal{C}$.
\end{proof}

At the end of this section, we give some applications.
We recall here the notion of cotilting modules for an artin algebra.
\begin{definition}{\rm (\cite{AR})
Let $A$ be an artin algebra and $T\in \mod A$. $T$ is called \emph{cotilting} if it satisfies the following conditions.
\begin{itemize}
\item[(1)] $T$ is selforthogonal, that is, $\Ext_{A}^{i}(T,T)=0$ for all $i>0$.
\item[(2)] ${\id_{A}}T<\infty$.
\item[(3)] For any injective $A$-module $I$, there exist some integer $n$ and an exact sequence
$$\xymatrix@C=15pt{0\ar[r]&T_{n}\ar[r]&T_{n-1}\ar[r]&\cdots\ar[r]&T_{1}\ar[r]&T_{0}\ar[r]&I\ar[r]&0}$$
in $\mod A$ with $T_{i}\in \add T$ for $0\leq i\leq n$.
\end{itemize}}
\end{definition}

We say a $A$-module $T$ is basic if in a direct sum decomposition into indecomposable modules, no indecomposable module appears more than once.
For a subclass $\mathcal{X}$ of $\mod A$, set
$${^{\perp}\mathcal{X}}:=\{A\in \mod A\mid \Ext_{A}^{i}(A,\mathcal{X})=0\text{ for all }i>0\}.$$
Dually, $\mathcal{X}^{\perp}$ is defined.

The following is the well-known Auslander-Reiten correspondence, which classifies cotilting modules using contravariantly finite resolving subcategories (see details for the notion of contravariantly finite subcategory in \cite[Page 114]{AR}).

\begin{theorem}\label{theorem-AR}
{\rm (\cite[Corollary 5.6]{AR})}
Let $A$ be an artin algebra of finite global dimension. Then the assignment $T\longrightarrow {^{\perp}T}$ gives a one-one correspondence between isomorphism classes of basic cotilting $A$-modules and contravariantly finite resolving subcategories of $\mod A$.
\end{theorem}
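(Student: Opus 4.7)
The plan is to establish the bijection in both directions. For the forward map, given a basic cotilting module $T$, I would check that $\mathcal{X}:={}^{\perp}T$ is a contravariantly finite resolving subcategory: the three resolving properties follow from the long exact $\Ext$ sequence together with $\Ext_A^i(P,T)=0$ for projective $P$ and $i>0$, while contravariant finiteness is the substantive step, proved by inducting on $\id_A T$ and producing a right ${}^{\perp}T$-approximation of any $M\in\mod A$ by splicing together short exact sequences arising from $T$-cogeneration and dimension shifting. Basicness of $T$ ensures it is determined up to isomorphism by $\add T$, and hence by the associated subcategory.

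For the inverse map, given a contravariantly finite resolving subcategory $\mathcal{X}$ of $\mod A$, I would recover a cotilting module as
$$T = \bigoplus_{j} T_{j},$$
where the $T_{j}$ range over a set of representatives of isomorphism classes of indecomposable \emph{Ext-injective} objects in $\mathcal{X}$, i.e., indecomposables $T_{j}\in\mathcal{X}$ with $\Ext_A^{1}(X,T_{j})=0$ for every $X\in\mathcal{X}$. One then verifies: (i) this set is finite, exploiting contravariant finiteness together with $\gl A<\infty$ so that only finitely many indecomposables can appear as summands of the right $\mathcal{X}$-approximations of the finitely many indecomposable injectives; (ii) $\Ext_A^{i}(T,T)=0$ for $i>0$, since $T\in\mathcal{X}$ and $T$ is Ext-injective in a resolving subcategory; (iii) $\id_A T<\infty$, immediate from $\gl A<\infty$; and (iv) every injective $I$ admits a finite coresolution by $\add T$, obtained by iterating right $\mathcal{X}$-approximations and using that the resolving hypothesis keeps the kernels inside $\mathcal{X}$, while Ext-injectivity forces the terms to lie in $\add T$.

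The main obstacle will be verifying that the two assignments are mutually inverse. The identity $T\rightsquigarrow{}^{\perp}T\rightsquigarrow T$ follows from an Auslander--Buchweitz / Bongartz-style argument identifying the Ext-injectives of ${}^{\perp}T$ with $\add T$. The harder identity $\mathcal{X}\rightsquigarrow T\rightsquigarrow{}^{\perp}T=\mathcal{X}$ reduces to the Ext-vanishing $\Ext_A^{i}(X,T)=0$ for all $X\in\mathcal{X}$ and $i>0$; I would prove this by induction on $i$, using that a right $\mathcal{X}$-approximation $\xi\colon Y\to X$ has kernel in $\mathcal{X}$ by the resolving property, and that the Ext-injective part of any $Y\in\mathcal{X}$ lies in $\add T$ by construction, so dimension-shifting along a deflation $Y\twoheadrightarrow X$ inside $\mathcal{X}$ transfers vanishing from degree $i$ to degree $i+1$.
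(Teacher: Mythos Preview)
The paper does not prove this theorem. Theorem~\ref{theorem-AR} is quoted verbatim from Auslander--Reiten \cite[Corollary~5.6]{AR} and used as a black box to deduce Propositions~\ref{appl1}--\ref{appl3}; no argument for it is given or sketched in the present paper. So there is no ``paper's own proof'' to compare your proposal against.

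That said, a few remarks on your sketch relative to the original argument in \cite{AR}. Your description of the inverse map via Ext-injectives of $\mathcal{X}$ is correct in spirit: for a resolving $\mathcal{X}$ one has $\mathcal{X}^{\perp_1}=\mathcal{X}^{\perp}$ by dimension shifting, so the Ext-injectives are exactly $\mathcal{X}\cap\mathcal{X}^{\perp}$, which is how the paper (following \cite[Theorem~5.5]{AR}) phrases the recovered cotilting module just after the statement. However, your justification of item~(iv) is garbled: for an injective $I$ one takes a minimal right $\mathcal{X}$-approximation $X_0\to I$, and it is \emph{Wakamatsu's lemma} that places the kernel $K_0$ in $\mathcal{X}^{\perp}$ (not the resolving hypothesis placing it in $\mathcal{X}$, as you wrote); then $X_0$, being an extension of $I\in\mathcal{X}^{\perp}$ by $K_0\in\mathcal{X}^{\perp}$, lies in $\mathcal{X}\cap\mathcal{X}^{\perp}=\add T$, and one iterates. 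Also, the word ``coresolution'' in (iv) should be ``resolution''. Your finiteness argument in~(i) is incomplete as stated: you need to explain why every indecomposable Ext-injective of $\mathcal{X}$ actually occurs as a summand of a minimal right $\mathcal{X}$-approximation of some indecomposable injective, which in \cite{AR} follows from the duality between $\mathcal{X}$ and the covariantly finite coresolving subcategory $\mathcal{X}^{\perp}$.
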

Following the above result,  for a contravariantly finite resolving subcategory $\mathcal{X}$ of $\mod A$, $T$ is a cotilting $A$-module satisfying $\add T=\mathcal{X}\cap\mathcal{X}^{\perp}$ (see {\rm (\cite[Theorem 5.5]{AR})}).
In this setting,
we say that there exists a cotilting $A$-module $T$ relative to the subcategory $\mathcal{X}:={^{\perp}T}$.

\begin{proposition}{\rm (\cite[Proposition 3.1]{ZhYY})}\label{Prop-contr}
Let $(\mathcal{A},\mathcal{B},\mathcal{C})$ be a recollement of abelian categories.
Assume that $\mathcal{X'}$ and $\mathcal{X''}$ are contravariantly finite subcategories of $\mathcal{A}$ and $\mathcal{C}$ respectively, and if $i^{*}$ is exact, then
$$\mathcal{X}:=\{B\in \mathcal{B}\mid i^{*}(B)\in \mathcal{X'}\ \text{and} \ j^{*}(B)\in \mathcal{X''}\}$$
 is a contravariantly finite subcategory of $\mathcal{B}$.
\end{proposition}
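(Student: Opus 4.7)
The plan is to construct, for any $B\in\mathcal{B}$, a right $\mathcal{X}$-approximation of $B$ by using the recollement to glue a right $\mathcal{X}'$-approximation of $i^{\ast}(B)$ and a right $\mathcal{X}''$-approximation of $j^{\ast}(B)$. Since $i^{\ast}$ is exact, Lemma~\ref{lem-rec}$(6')$--$(7')$ guarantees that $j_!$ is exact and yields the short exact sequence $0\to j_!j^{\ast}(B)\xrightarrow{\upsilon_B}B\xrightarrow{\nu_B}i_{\ast}i^{\ast}(B)\to 0$ in $\mathcal{B}$, which will serve as the backbone of the construction. Choose right approximations $f'\colon X'\to i^{\ast}(B)$ with $X'\in\mathcal{X}'$ and $f''\colon X''\to j^{\ast}(B)$ with $X''\in\mathcal{X}''$.

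The first step is to form the pullback $Z:=B\times_{i_{\ast}i^{\ast}(B)}i_{\ast}(X')$, which produces a short exact sequence $0\to j_!j^{\ast}(B)\to Z\to i_{\ast}(X')\to 0$ together with a morphism $Z\to B$; applying the exact functors $i^{\ast}$ and $j^{\ast}$ and invoking $i^{\ast}j_!=0$, $j^{\ast}i_{\ast}=0$, $i^{\ast}i_{\ast}\cong\mathrm{Id}_{\mathcal{A}}$, and $j^{\ast}j_!\cong\mathrm{Id}_{\mathcal{C}}$ from Lemma~\ref{lem-rec} yields $i^{\ast}(Z)\cong X'$ and $j^{\ast}(Z)\cong j^{\ast}(B)$. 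The second step refines $Z$ on the $\mathcal{C}$-side: via the adjunction $(j_!,j^{\ast})$ the approximation $f''$ corresponds to a morphism $\widetilde{f''}\colon j_!(X'')\to Z$ landing in the subobject $j_!j^{\ast}(B)\hookrightarrow Z$, and using exactness of $j_!$ I would form an object $Y$ sitting in a short exact sequence $0\to j_!(X'')\to Y\to i_{\ast}(X')\to 0$ together with a morphism $g\colon Y\to Z\to B$ that restricts to $j_!(f'')$ on the kernel. Applying $i^{\ast}$ and $j^{\ast}$ to this new sequence then yields $i^{\ast}(Y)\cong X'\in\mathcal{X}'$ and $j^{\ast}(Y)\cong X''\in\mathcal{X}''$, whence $Y\in\mathcal{X}$.

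To see that $g\colon Y\to B$ is a right $\mathcal{X}$-approximation, I would take any $W\in\mathcal{X}$ with a morphism $h\colon W\to B$; the right approximation properties of $\mathcal{X}'$ and $\mathcal{X}''$ supply factorizations $i^{\ast}(h)=f'\beta$ and $j^{\ast}(h)=f''\alpha$. Transporting $\beta$ and $\alpha$ back to $\mathcal{B}$ via $i_{\ast}$ and $j_!$ and using the units/counits $\upsilon$, $\nu$ together with the universal property of the two-step construction defining $Y$, one assembles a morphism $W\to Y$ with $g\circ(W\to Y)=h$. The main obstacle will be the second step together with this compatibility chase: producing the refined $Y$ requires aligning the extension class of $B$ pulled back along $i_{\ast}(f')$ with the pushforward of the extension class of $Y$ along $j_!(f'')$, and the gluing of the $\mathcal{A}$- and $\mathcal{C}$-side factorizations amounts to a naturality diagram-chase that exploits the exactness of $i^{\ast}$ (and hence of $j_!$) to keep all sequences genuinely short exact and the adjunction morphisms compatible.
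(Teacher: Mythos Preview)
First, note that the paper does \emph{not} supply its own proof of this proposition: it is quoted verbatim from \cite[Proposition~3.1]{ZhYY} and used as a black box. So there is no argument in the present paper to compare your proposal against.

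That said, your strategy has a genuine gap in Step~2. After the first pullback you have the short exact sequence
\[
0\longrightarrow j_!j^{\ast}(B)\longrightarrow Z\longrightarrow i_{\ast}(X')\longrightarrow 0
\]
and you correctly observe that the adjoint $\widetilde{f''}\colon j_!(X'')\to Z$ factors through $j_!j^{\ast}(B)$ as $j_!(f'')$. But then you ask for an object $Y$ sitting in $0\to j_!(X'')\to Y\to i_{\ast}(X')\to 0$ together with a map $Y\to Z$ that is $j_!(f'')$ on kernels and the identity on $i_{\ast}(X')$. Such a diagram exists precisely when the class $[Z]\in\Ext^{1}_{\mathcal{B}}\!\big(i_{\ast}(X'),\,j_!j^{\ast}(B)\big)$ lies in the image of the pushforward $(j_!(f''))_{\ast}\colon \Ext^{1}_{\mathcal{B}}\!\big(i_{\ast}(X'),\,j_!(X'')\big)\to \Ext^{1}_{\mathcal{B}}\!\big(i_{\ast}(X'),\,j_!j^{\ast}(B)\big)$, and there is no reason for this in general. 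Exactness of $j_!$ does not help here: it only tells you that $j_!$ preserves short exact sequences in $\mathcal{C}$, not that extension classes in $\mathcal{B}$ lift along $j_!(f'')$. You flag exactly this alignment of extension classes as ``the main obstacle'', but it is not an obstacle one can chase through---it is simply false without further input.

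The fix is to reverse the order of the two pullbacks. First form
\[
Z':=B\times_{j_{\ast}j^{\ast}(B)}j_{\ast}(X'')
\]
via the unit $\vartheta_B\colon B\to j_{\ast}j^{\ast}(B)$ and $j_{\ast}(f'')$; since $j^{\ast}$ is exact and $j^{\ast}j_{\ast}\cong\Id_{\mathcal{C}}$, this gives $j^{\ast}(Z')\cong X''$. Now choose a right $\mathcal{X}'$-approximation $f'\colon X'\to i^{\ast}(Z')$ (of $i^{\ast}(Z')$, \emph{not} of $i^{\ast}(B)$) and set
\[
Y:=Z'\times_{i_{\ast}i^{\ast}(Z')}i_{\ast}(X').
\]
Because $i^{\ast}$ is exact, the kernel of $Z'\to i_{\ast}i^{\ast}(Z')$ is $j_!j^{\ast}(Z')\cong j_!(X'')$, so $Y$ sits in $0\to j_!(X'')\to Y\to i_{\ast}(X')\to 0$; applying $i^{\ast}$ and $j^{\ast}$ now gives $i^{\ast}(Y)\cong X'$ and $j^{\ast}(Y)\cong X''$, hence $Y\in\mathcal{X}$. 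The factorization of an arbitrary $h\colon W\to B$ with $W\in\mathcal{X}$ then goes through the two pullback universal properties exactly as you sketched, using first the $(j^{\ast},j_{\ast})$-adjunction and then the $(i^{\ast},i_{\ast})$-adjunction. Doing the $i_{\ast}$-pullback \emph{second} is what makes the kernel land in $j_!(\mathcal{C})$, where $i^{\ast}$ and $j^{\ast}$ both behave well; in your order the second step would need a $j_{\ast}$-pullback, and $i^{\ast}j_{\ast}$ is not controlled.
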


Combining Theorems \ref{main-resolving} and \ref{theorem-AR}, and Proposition \ref{Prop-contr}, we can get a gluing method of cotilting modules as follows.

\begin{proposition}\label{appl1}
Let $A$, $B$ and $C$ be artin algebras with finite global dimensions, and let
\begin{equation*}
  \xymatrix{\mod A\ar[rr]|{i_{*}}&&\ar@/_1pc/[ll]|{i^{*}}\ar@/^1pc/[ll]|{i^{!}}\mod B
\ar[rr]|{j^{\ast}}&&\ar@/_1pc/[ll]|{j_{!}}\ar@/^1pc/[ll]|{j_{\ast}}\mod C}
\end{equation*}
 be a recollement of module categories.
Assume that $T'$ and $T''$ are cotilting modules in $\mod A$ and $\mod C$ respectively.
If $i^{*}$ is exact and $j^{*}$ preserves projectives, then there exists a cotilting $B$-module $T$ glued by $T'$ and $T''$, that is, $T$ is a cotilting module relative to the subcategory
$$\mathcal{X}:=\{B\in \mod B\mid i^{*}(B)\in {^{\perp}T'}\ \text{and} \ j^{*}(B)\in {^{\perp}T''}\}$$
of $\mod B$ such that $\mathcal{X}={^{\perp}T}$.
\end{proposition}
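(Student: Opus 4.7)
The proposition is essentially a direct combination of the three previously cited results, so the plan is to run the Auslander–Reiten correspondence in both directions with the gluing theorem sitting in the middle. First, I would unpack the hypothesis: since $T'$ is a basic cotilting module over the artin algebra $A$ (which has finite global dimension), Theorem~\ref{theorem-AR} guarantees that $\mathcal{X}' := {}^{\perp}T'$ is a contravariantly finite resolving subcategory of $\mod A$. Likewise $\mathcal{X}'' := {}^{\perp}T''$ is a contravariantly finite resolving subcategory of $\mod C$. (If $T'$ or $T''$ is not basic, I would first replace it by its basic version, which does not change the category ${}^{\perp}T'$ or ${}^{\perp}T''$.)

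Next I would invoke Theorem~\ref{main-resolving} in the setting of the recollement of abelian categories $(\mod A, \mod B, \mod C)$; a recollement of abelian categories is in particular a recollement of extriangulated categories, so the theorem applies. Since by hypothesis $i^{*}$ is exact and $j^{*}$ preserves projectives, the glued subcategory
\[
\mathcal{X} = \{B \in \mod B \mid i^{*}(B) \in \mathcal{X}' \text{ and } j^{*}(B) \in \mathcal{X}''\}
\]
is a resolving subcategory of $\mod B$. To upgrade this to a \emph{contravariantly finite} resolving subcategory, I would apply Proposition~\ref{Prop-contr}: exactness of $i^{*}$ together with contravariant finiteness of $\mathcal{X}'$ and $\mathcal{X}''$ yields contravariant finiteness of $\mathcal{X}$ in $\mod B$.

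Finally, I would apply Theorem~\ref{theorem-AR} in the reverse direction over the algebra $B$ (which also has finite global dimension): the contravariantly finite resolving subcategory $\mathcal{X}$ of $\mod B$ corresponds to a unique isomorphism class of basic cotilting $B$-modules $T$, characterized by $\mathcal{X} = {}^{\perp}T$ (and one may recover $\add T = \mathcal{X} \cap \mathcal{X}^{\perp}$ from the remark following Theorem~\ref{theorem-AR}). This $T$ is the desired cotilting module glued from $T'$ and $T''$.

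There is no real obstacle here; the content of the proposition is precisely that Theorem~\ref{main-resolving} and Proposition~\ref{Prop-contr} have compatible hypotheses, so that the gluing of resolving subcategories preserves contravariant finiteness and therefore translates under the Auslander–Reiten bijection into a gluing of cotilting modules. The only point to be careful about is to verify, before quoting Theorem~\ref{theorem-AR} for $B$, that $B$ indeed has finite global dimension (this is part of the hypothesis) and that the glued $\mathcal{X}$ is genuinely a subcategory of $\mod B$ satisfying both the resolving and the contravariant finiteness conditions simultaneously — which is exactly what the preceding two steps produce.
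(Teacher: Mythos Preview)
Your proposal is correct and follows exactly the approach the paper indicates: the paper gives no explicit proof, stating only that the proposition follows by combining Theorems~\ref{main-resolving} and~\ref{theorem-AR} with Proposition~\ref{Prop-contr}, which is precisely the three-step argument you have spelled out.
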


\begin{remark}
In fact,
in Proposition \ref{appl1}, one can only assume that $A$ and $C$ have finite global dimensions. Then
by \cite[Theorem 4.4]{PC14H}, the global dimension of $B$ is also finite.
\end{remark}

Conversely, we have
\begin{proposition}\label{appl2}
Let $A$, $B$ and $C$ be artin algebras with finite global dimensions, and let
\begin{equation*}
  \xymatrix{\mod A\ar[rr]|{i_{*}}&&\ar@/_1pc/[ll]|{i^{*}}\ar@/^1pc/[ll]|{i^{!}}\mod B
\ar[rr]|{j^{\ast}}&&\ar@/_1pc/[ll]|{j_{!}}\ar@/^1pc/[ll]|{j_{\ast}}\mod C}
\end{equation*}
 be a recollement of module categories.
Assume that $T$ is a cotilting $B$-module. Let  $\mathcal{X}'=\add i^{*}({^\perp T})$ and $\mathcal{X}''=\add j^{*}({^\perp T})$.  Then we have the following statements.
\begin{itemize}
\item[(1)] If $i_{*}(\mathcal{X}')\subseteq {^\perp T}$, then there exists a cotilting $A$-module $T'$ relative to the subcategory $\mathcal{X}'$.
\item[(2)] If $j_{!}(\mathcal{X}'')\subseteq {^\perp T}$, $j_{!}$ is exact and $j^{*}$ preserves projectives, then there exists a cotilting $C$-module $T''$ relative to  the subcategory $\mathcal{X}''$.
\end{itemize}
\end{proposition}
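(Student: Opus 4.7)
The plan is to apply the Auslander--Reiten correspondence (Theorem~\ref{theorem-AR}) in the reverse direction to the one used in Proposition~\ref{appl1}. Since $T$ is a basic cotilting $B$-module and $B$ has finite global dimension, $\mathcal{X}:={}^{\perp}T$ is a contravariantly finite resolving subcategory of $\mod B$. My task in each part is to verify that the candidate subcategory $\mathcal{X}'=\add i^{*}(\mathcal{X})$ (resp.\ $\mathcal{X}''=\add j^{*}(\mathcal{X})$) is itself a contravariantly finite resolving subcategory of $\mod A$ (resp.\ $\mod C$); Theorem~\ref{theorem-AR} will then output the basic cotilting module $T'$ (resp.\ $T''$) with ${}^{\perp}T'=\mathcal{X}'$ (resp.\ ${}^{\perp}T''=\mathcal{X}''$).

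The resolving half is essentially free of charge: Theorem~\ref{main-resolving-conve}(1) gives that $\mathcal{X}'$ is a resolving subcategory of $\mod A$ under exactly the hypothesis $i_{*}(\mathcal{X}')\subseteq\mathcal{X}$ of part (1), and Theorem~\ref{main-resolving-conve}(2) gives the analogous conclusion for $\mathcal{X}''$ using the hypotheses of part (2) that $j_{!}(\mathcal{X}'')\subseteq\mathcal{X}$, $j_{!}$ is exact, and $j^{*}$ preserves projectives. So the substantive task is to establish contravariant finiteness.

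For part (1), given $M\in\mod A$, I would take a right $\mathcal{X}$-approximation $f\colon X\to i_{*}(M)$ in $\mod B$ (which exists because $\mathcal{X}$ is contravariantly finite) and propose $i^{*}(f)\colon i^{*}(X)\to i^{*}i_{*}(M)\cong M$ as the candidate right $\mathcal{X}'$-approximation of $M$; note $i^{*}(X)\in\mathcal{X}'$ by the very definition of $\mathcal{X}'$. To verify the lifting property, given any $g\colon Y\to M$ with $Y\in\mathcal{X}'$, the hypothesis $i_{*}(\mathcal{X}')\subseteq\mathcal{X}$ places $i_{*}(Y)$ inside $\mathcal{X}$, so $i_{*}(g)$ factors through $f$; applying $i^{*}$ and collapsing $i^{*}i_{*}\cong\Id_{\mathcal{A}}$ (Lemma~\ref{lem-rec}(1)) then recovers a factorization of $g$ through $i^{*}(f)$. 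For part (2) the argument is entirely parallel, with $j_{!}$ replacing $i_{*}$ and the isomorphism $j^{*}j_{!}\cong\Id_{\mathcal{C}}$ playing the role of $i^{*}i_{*}\cong\Id_{\mathcal{A}}$; the hypothesis $j_{!}(\mathcal{X}'')\subseteq\mathcal{X}$ supplies the membership $j_{!}(Y)\in\mathcal{X}$ needed to lift $j_{!}(g)$ through the chosen right $\mathcal{X}$-approximation of $j_{!}(M)$.

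The main subtlety I anticipate is bookkeeping around the closure under summands used in forming $\add i^{*}(\mathcal{X})$: a priori, $Y\in\mathcal{X}'$ need not have the form $i^{*}(W)$ but only be a summand of such, so one has to check that the hypothesis $i_{*}(\mathcal{X}')\subseteq\mathcal{X}$, which is phrased for the whole additive closure, indeed places every such $Y$ into $\mathcal{X}$ after pushing along $i_{*}$; since $\mathcal{X}={}^{\perp}T$ is closed under summands this step is painless but worth flagging. Once contravariant finiteness is in hand, Theorem~\ref{theorem-AR}, applicable because $A$ and $C$ have finite global dimension, delivers the required basic cotilting modules $T'$ and $T''$, completing both parts.
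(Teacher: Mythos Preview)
Your proposal is correct and follows essentially the same strategy as the paper: start from the contravariantly finite resolving subcategory $\mathcal{X}={}^{\perp}T$, use Theorem~\ref{main-resolving-conve}(1),(2) to obtain that $\mathcal{X}'$ and $\mathcal{X}''$ are resolving, separately establish contravariant finiteness, and then invoke Theorem~\ref{theorem-AR}.

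The only difference is in how contravariant finiteness is obtained. The paper simply cites an external result, \cite[Lemma~3.6]{MXZ22S}, which says that $\add i^{*}(\mathcal{X})$ and $\add j^{*}(\mathcal{X})$ are contravariantly finite whenever $\mathcal{X}$ is; this holds without invoking the hypotheses $i_{*}(\mathcal{X}')\subseteq\mathcal{X}$ or $j_{!}(\mathcal{X}'')\subseteq\mathcal{X}$. You instead give an explicit construction of the right approximation, and you use those extra hypotheses to place $i_{*}(Y)$ (resp.\ $j_{!}(Y)$) inside $\mathcal{X}$. Your argument is perfectly valid since the hypotheses are already assumed for the resolving part, but it is worth noting that the approximation step can be made to work without them: if $Y$ is a summand of $i^{*}(W)$ with $W\in\mathcal{X}$, one lifts $g\pi\colon i^{*}(W)\to M$ through the adjunction $(i^{*},i_{*})$ directly, using only $W\in\mathcal{X}$, and then restricts along the split inclusion $Y\hookrightarrow i^{*}(W)$. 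This is presumably what the cited lemma does, and it explains why the paper states the contravariant finiteness conclusion unconditionally.
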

\begin{proof}
Assume that $T$ is a cotilting $B$-module, then ${^{\perp}T}$ is a contravariantly finite resolving subcategory of $\mod B$ by Theorem \ref{theorem-AR}.
By \cite[Lemma 3.6]{MXZ22S}, $\mathcal{X}'$ and $\mathcal{X}''$ are contravariantly finite subcategories of $\mod A$ and $\mod C$ respectively.
By Theorem \ref{main-resolving-conve}(1), $\mathcal{X}'$ is a resolving subcategory of $\mod A$.
 By Theorem \ref{main-resolving-conve}(2), $\mathcal{X}''$ is a resolving subcategory of $\mod C$.
Then the assertions follow from Theorem \ref{theorem-AR}.
\end{proof}

\begin{proposition}\label{appl3}
Let $A$, $B$ and $C$ be artin algebras with finite global dimensions, and let
\begin{equation*}
  \xymatrix{\mod A\ar[rr]|{i_{*}}&&\ar@/_1pc/[ll]|{i^{*}}\ar@/^1pc/[ll]|{i^{!}}\mod B
\ar[rr]|{j^{\ast}}&&\ar@/_1pc/[ll]|{j_{!}}\ar@/^1pc/[ll]|{j_{\ast}}\mod C}
\end{equation*}
 be a recollement of module categories.
Assume that $T$ is a cotilting $B$-module. Let $\mathcal{X}''=\add j^{*}({^\perp T})$.
If $j_{*}(\mathcal{X}'')\subseteq {^\perp T}$ and $j_{*}$ is exact, then there exists a cotilting $C$-module $T''$ relative to  the subcategory $\mathcal{X}''$.
\end{proposition}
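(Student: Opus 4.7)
The plan is to reduce the statement to the Auslander--Reiten correspondence (Theorem \ref{theorem-AR}) by showing that $\mathcal{X}''$ is a contravariantly finite resolving subcategory of $\mod C$. The argument will closely mirror the proof of Proposition \ref{appl2}, but with Proposition \ref{prop-conve} replacing Theorem \ref{main-resolving-conve}(2), since here the hypotheses are phrased in terms of $j_*$ rather than $j_!$.

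First, because $T$ is a cotilting $B$-module and $B$ has finite global dimension, Theorem \ref{theorem-AR} guarantees that ${}^\perp T$ is a contravariantly finite resolving subcategory of $\mod B$. Next, by the hypotheses $j_*(\mathcal{X}'') \subseteq {}^\perp T$ and $j_*$ exact, Proposition \ref{prop-conve} applies to the resolving subcategory $\mathcal{X} = {}^\perp T$ and yields that $\mathcal{X}'' = \add j^*({}^\perp T)$ is a resolving subcategory of $\mod C$.

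It remains to verify that $\mathcal{X}''$ is contravariantly finite in $\mod C$. For this I would invoke \cite[Lemma 3.6]{MXZ22S}, exactly as in the proof of Proposition \ref{appl2}: since ${}^\perp T$ is contravariantly finite in $\mod B$, its image $\add j^*({}^\perp T)$ is contravariantly finite in $\mod C$. (Note that since $j_*$ is exact, Lemma \ref{lem-rec}(4) ensures $j^*$ preserves projectives, so the ambient setup is compatible with the results on $\mathcal{P}(\mathcal{C}) = \add j^*(\mathcal{P}(\mathcal{B}))$ used to handle projectives in $\mathcal{X}''$.)

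Finally, applying Theorem \ref{theorem-AR} to the contravariantly finite resolving subcategory $\mathcal{X}''$ of $\mod C$ (which has finite global dimension by hypothesis) produces a basic cotilting $C$-module $T''$ with ${}^\perp T'' = \mathcal{X}''$, which is the desired conclusion. The potential obstacle is purely bookkeeping: checking that all the hypotheses needed to apply Proposition \ref{prop-conve} and \cite[Lemma 3.6]{MXZ22S} are in force; the substantive content has been packaged into those earlier results, so no new homological computation is required.
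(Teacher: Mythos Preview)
Your proposal is correct and follows essentially the same approach as the paper: establish via Theorem \ref{theorem-AR} that ${}^\perp T$ is contravariantly finite resolving, invoke \cite[Lemma 3.6]{MXZ22S} for contravariant finiteness of $\mathcal{X}''$, apply the resolving-transfer result to get $\mathcal{X}''$ resolving, and conclude by Theorem \ref{theorem-AR}. One remark: the paper's proof cites Theorem \ref{main-resolving-conve}(2) at the resolving step, but since the hypotheses of Proposition \ref{appl3} are on $j_*$ rather than $j_!$, your citation of Proposition \ref{prop-conve} is actually the appropriate one.
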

\begin{proof}
Assume that $T$ is a cotilting $B$-module, then ${^{\perp}T}$ is a contravariantly finite resolving subcategory of $\mod B$ by Theorem \ref{theorem-AR}.
By \cite[Lemma 3.6]{MXZ22S}, $\mathcal{X}''$ is a contravariantly finite subcategories of $\mod C$.
 By Theorem \ref{main-resolving-conve}(2), $\mathcal{X}''$ is a resolving subcategory of $\mod C$.
Then the assertion follows from Theorem \ref{theorem-AR}.
\end{proof}

%%%  Since $j^{*}$ preserves projectives, $\gl C\leq \gl B$

\section{Resolving resolution dimensions and recollements}

In this section, we give some bounds for resolution dimension of (resolving) subcategories  in a recollement of extriangulated categories.

 Recall from \cite{ZZ20T} that an {\em $\mathbb{E}$-triangle sequence} is defined as a sequence
 $$\cdots {\longrightarrow}X_{n+2}\stackrel{d_{n+2}}{\longrightarrow}X_{n+1}\stackrel{d_{n+1}}{\longrightarrow} X_{n}\stackrel{d_{n}}{\longrightarrow}X_{n-1}{\longrightarrow}\cdots $$
 in $\mathcal{C}$ such that for any $n$,
 there exist $\mathbb{E}$-triangles $K_{n+1}\stackrel{g_{n}}{\longrightarrow}X_{n}\stackrel{f_{n}}{\longrightarrow}K_{n}\stackrel{}\dashrightarrow$ and the differential $d_n=g_{n-1}f_n$.

 Now we introduce the notion of resolution dimension for a subcategory of $\mathcal{C}$.
 \begin{definition}
{\rm Let $\mathcal{X}$ be a subcategory of $\mathcal{C}$ and $M$ an object in $\mathcal{C}$.
The {$\mathcal{X}$-resolution dimension} of $M$, written $\mathcal{X}$-$\res M$, is defined by
\begin{align*}
\mathcal{X}\text{-}\res M=&\inf \{n \geq 0\mid\text{ there exists an } \mathbb{E}\text{-triangle sequence}\\
&\xymatrix@C=15pt
{0\ar[r]&X_{n}\ar[r]&\cdots\ar[r]&X_{1}\ar[r]&X_{0}\ar[r]&M\ar[r]&0} \text{ in } \mathcal{C} \text{ with } X_{i}\in \mathcal{X} \text{ for }0\leq i\leq n\}.
\end{align*}
The resolution dimension of $\mathcal{C}$, denoted by $\mathcal{X}\text{-}\res \mathcal{C}$, is defined as
$$\mathcal{X}\text{-}\res  \mathcal{C}:=\sup\{\mathcal{X}\text{-}\res M\mid M\in \mathcal{C}\}.$$}
\end{definition}

%For an $\mathbb{E}$-triangle sequence
%$$\xymatrix@C=15pt
%{\cdots\ar[r]^{f_{n+1}}&X_{n}\ar[r]&\cdots\ar[r]^{f_{2}}&X_{1}\ar[r]^{f_{1}}&X_{0}\ar[r]^{f_{0}}&M\ar[r]&0}$$ with all $X_{i}\in \mathcal{X}$.
%The $\cocone( f_{n-1})$ is called an $n$th $\mathcal{X}$-syzygy of $M$, denoted by $\Omega^{n}_{\mathcal{X}}(M)$.

In case for $\mathcal{X}=\mathcal{P(C)}$, $\mathcal{X}\text{-}\res M$ coincides with $\pd M$ (the projective dimension of $M$), and $\mathcal{X}\text{-}\res\mathcal{C}$ coincides with $\gl \mathcal{C}$ (the global dimension of $\mathcal{C}$) defined in \cite{GMT}.

Now,
we give some useful facts.

The following result gives a simultaneous generalization of  \cite[Lemma 2.2]{ZXS13R} for an abelian category and \cite[Proposition 3.4]{MZ21R} for a triangulated category with a proper class $\xi$ of triangles.

\begin{lemma} \label{prop-resdim}
Let $\mathcal{C}$ be an extriangulated categories, and let $\mathcal{X}$ be a resolving subcategory of $\mathcal{C}$.
Assume that
$$\xymatrix@C=15pt{X\ar[r]&Y\ar[r]&Z\ar@{-->}[r]&}$$ is an $\mathbb{E}$-triangle in $\mathcal{C}$.
Then we have the following statements.
\begin{itemize}
\item[(1)] $\mathcal{X}$-$\res Y\leq \max\{\mathcal{X}\text{-}\res X, \mathcal{X}\text{-}\res Z\}$.
\item[(2)] $\mathcal{X}$-$\res X\leq \max\{\mathcal{X}\text{-}\res Y, \mathcal{X}\text{-}\res Z-1\}$.
\item[(3)] $\mathcal{X}$-$\res Z\leq \max\{\mathcal{X}\text{-}\res X+1, \mathcal{X}\text{-}\res Y\}$.
    \end{itemize}
\end{lemma}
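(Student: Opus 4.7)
The plan is to prove each of (1), (2), (3) by induction on the maximum appearing on the right-hand side, reducing to a base case where that maximum is zero (or, in (3), one). The base case in each instance is an assertion that the third object of the $\mathbb{E}$-triangle lies in $\mathcal{X}$, or admits an $\mathcal{X}$-resolution of length at most one, and follows directly from the axioms defining a resolving subcategory: closure under extensions gives $Y\in\mathcal{X}$ whenever $X,Z\in\mathcal{X}$, handling (1); closure under cocones gives $X\in\mathcal{X}$ whenever $Y,Z\in\mathcal{X}$, handling the first base case of (2); and for (3) the $\mathbb{E}$-triangle $X\to Y\to Z\dashrightarrow$ is itself an $\mathcal{X}$-resolution of $Z$ of length $\le 1$ when $X,Y\in\mathcal{X}$.

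For the inductive step of (1), I would set $m=\mathcal{X}\text{-}\res X$, $n=\mathcal{X}\text{-}\res Z$ and pick $\mathbb{E}$-triangles $X'\to X_0\to X\dashrightarrow$ and $Z'\to Z_0\to Z\dashrightarrow$ with $X_0,Z_0\in\mathcal{X}$ and syzygies satisfying $\mathcal{X}\text{-}\res X'\le m-1$, $\mathcal{X}\text{-}\res Z'\le n-1$. The key construction is a horseshoe-style $3\times 3$ commutative diagram of $\mathbb{E}$-triangles whose middle column reads $Y'\to X_0\oplus Z_0\to Y\dashrightarrow$ and whose top row reads $X'\to Y'\to Z'\dashrightarrow$. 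This diagram is assembled using the axioms (ET3) and (ET4) of \cite{Na}, together with Condition (WIC) to guarantee that the morphisms produced are genuine inflations and deflations. The induction hypothesis applied to the top row yields $\mathcal{X}\text{-}\res Y'\le\max\{m,n\}-1$, and splicing this resolution of $Y'$ together with the middle-column $\mathbb{E}$-triangle delivers an $\mathcal{X}$-resolution of $Y$ of length at most $\max\{m,n\}$.

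For (2) and (3), I would run the parallel induction by cycling the roles of $X,Y,Z$. In each case one produces, via the analogous horseshoe construction, a syzygy-level $\mathbb{E}$-triangle to which the induction hypothesis applies, and then concatenates the resulting resolution with one step at the top to obtain the claimed bound on the remaining term. The main obstacle throughout is this syzygy construction: an extriangulated category has neither kernels nor a shift functor, so every intermediate conflation must be obtained from the (ET3)/(ET4) axioms applied to a carefully chosen commutative diagram, with Condition (WIC) invoked to certify that the composite morphisms so produced are again inflations or deflations. Once the correct syzygy $\mathbb{E}$-triangles are in hand, the inductive step and the assembly of the final $\mathbb{E}$-triangle sequence are routine bookkeeping. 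The abelian-category proof \cite[Lemma 2.2]{ZXS13R} and the triangulated analogue \cite[Proposition 3.4]{MZ21R} both follow exactly this pattern, so the present statement should be a direct extriangulated adaptation.
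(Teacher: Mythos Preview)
Your proposal is correct and follows the same approach the paper intends: the paper's proof consists solely of the line ``The proof is similar to \cite[Proposition 3.4]{MZ21R}'', and what you have outlined is precisely the extriangulated adaptation of that argument (horseshoe-type syzygy construction plus induction, with (ET3)/(ET4) and Condition~(WIC) replacing the kernel/shift machinery). One small refinement worth noting: in the inductive step it is cleanest to choose the covers $X_0\to X$ and $Z_0\to Z$ to be \emph{projective} (possible since $\mathcal{C}$ has enough projectives and $\mathcal{P}(\mathcal{C})\subseteq\mathcal{X}$), so that the lift $Z_0\to Y$ needed for the horseshoe exists automatically; the independence lemma (Lemma~\ref{omega}) then lets you pass back to arbitrary $\mathcal{X}$-resolutions.
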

\begin{proof}
The proof is similar to \cite[Proposition 3.4]{MZ21R}.
\end{proof}

The following result gives a simultaneous generalization of  \cite[Lemma 2.1]{ZXS13R} for an abelian category and \cite[Proposition 3.2]{MZ21R} for a triangulated category with a proper class $\xi$ of triangles.

\begin{lemma}\label{omega}
Let $\mathcal{C}$ be an extriangulated category and $\mathcal{X}$ a resolving subcategory of $\mathcal{C}$.
For any object $M\in \mathcal{C}$, if
$$\xymatrix@C=15pt
{0\ar[r]&X_{n}\ar[r]&\cdots\ar[r]&X_{1}\ar[r]&X_{0}\ar[r]&M\ar[r]&0}$$ and
$$\xymatrix@C=15pt
{0\ar[r]&Y_{n}\ar[r]&\cdots\ar[r]&Y_{1}\ar[r]&Y_{0}\ar[r]&M\ar[r]&0}$$
are $\mathbb{E}$-triangle sequences with all $X_{i}$ and $Y_{i}$ in $\mathcal{X}$ for $0\leq i\leq n-1$, then $X_{n}\in\mathcal{X}$ if and only if $Y_{n}\in\mathcal{X}$.
\end{lemma}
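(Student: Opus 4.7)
The plan is to reduce the lemma to a single auxiliary claim which characterises membership of the last term of any such resolution purely in terms of the numerical invariant $\mathcal{X}$-$\res M$. Specifically, I will establish:
\begin{center}
$(\ast)$\quad For any $\mathbb{E}$-triangle sequence $0\to X_n\to\cdots\to X_0\to M\to 0$ with $X_i\in\mathcal{X}$ for $0\le i\le n-1$, one has $X_n\in\mathcal{X}$ if and only if $\mathcal{X}$-$\res M\le n$.
\end{center}
Given $(\ast)$, the lemma is immediate: applied in turn to the $X$-sequence and the $Y$-sequence of the hypothesis, it yields $X_n\in\mathcal{X}$ if and only if $\mathcal{X}$-$\res M\le n$ if and only if $Y_n\in\mathcal{X}$.

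I will prove $(\ast)$ by induction on $n$. The forward direction is immediate, since $X_n\in\mathcal{X}$ makes the given sequence an $\mathcal{X}$-resolution of length $n$. For the reverse direction, the case $n=0$ is trivial. For $n=1$, assume $\mathcal{X}$-$\res M\le 1$: if $M\in\mathcal{X}$, closure of $\mathcal{X}$ under cocones applied to $X_1\to X_0\to M$ gives $X_1\in\mathcal{X}$; otherwise, pick any length-$1$ $\mathcal{X}$-resolution $Z_1\to Z_0\to M$ and form the pullback of the two deflations $X_0\to M$ and $Z_0\to M$ in the extriangulated setting (via the dual of axiom (ET4)), producing an object $E$ that fits into $\mathbb{E}$-triangles $X_1\to E\to Z_0$ and $Z_1\to E\to X_0$. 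Since $Z_1,X_0\in\mathcal{X}$, closure under extensions gives $E\in\mathcal{X}$, and then closure under cocones applied to $X_1\to E\to Z_0$ with $E,Z_0\in\mathcal{X}$ gives $X_1\in\mathcal{X}$.

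For the inductive step $n\ge 2$, split off the first $\mathbb{E}$-triangle $K_1\to X_0\to M$ of the given sequence, leaving a length-$(n-1)$ $\mathbb{E}$-triangle sequence $0\to X_n\to\cdots\to X_1\to K_1\to 0$ with intermediate terms in $\mathcal{X}$. Lemma \ref{prop-resdim}(2) applied to $K_1\to X_0\to M$, together with $X_0\in\mathcal{X}$ and the hypothesis $\mathcal{X}$-$\res M\le n$, yields $\mathcal{X}$-$\res K_1\le\max\{0,n-1\}=n-1$. The inductive hypothesis of $(\ast)$ at level $n-1$, applied to the length-$(n-1)$ sequence resolving $K_1$, then forces $X_n\in\mathcal{X}$.

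The hard part is the $n=1$ pullback construction in the extriangulated setting, which is the only step where the ambient structure beyond Lemma \ref{prop-resdim} is genuinely invoked. Although by now standard, it depends on the dual of axiom (ET4) together with Condition (WIC) to produce the bridging object $E$ and to confirm that the constituent morphisms are genuine inflations and deflations; once this pullback is in hand, the remainder of the argument is a direct induction driven by Lemma \ref{prop-resdim}.
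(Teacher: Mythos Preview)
Your argument is correct and takes a different route from the paper. The paper's own proof is merely a pointer to \cite[Proposition~3.2]{MZ21R} (which in turn follows \cite[Lemma~2.1]{ZXS13R}): the direct Schanuel-type comparison where one forms the pullback of the two deflations $X_0\to M\leftarrow Y_0$ to obtain an object $E$ sitting in $\mathbb{E}$-triangles $X_1\to E\to Y_0$ and $Y_1\to E\to X_0$, uses extension and cocone closure to pass the condition ``last term lies in $\mathcal{X}$'' between the two first syzygies, and then inducts on $n$ over the truncated sequences. Your reformulation via the numerical criterion $(\ast)$ is slicker---it avoids tracking two sequences in parallel and off-loads the inductive step to Lemma~\ref{prop-resdim}(2).

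The one hazard is circularity. In both \cite{MZ21R} and \cite{ZXS13R} the analogue of Lemma~\ref{prop-resdim} is stated \emph{after} the analogue of Lemma~\ref{omega}, and part~(2) is typically deduced using the latter; the present paper reverses the order of the statements but still cites those sources verbatim for both proofs. So before invoking Lemma~\ref{prop-resdim}(2) as a black box here, you must confirm that its proof does not rest on the very lemma you are proving. The repair is painless: the only instance you actually use is $\mathcal{X}\text{-}\res K_1\le\mathcal{X}\text{-}\res M-1$ for an $\mathbb{E}$-triangle $K_1\to X_0\to M$ with $X_0\in\mathcal{X}$, and this follows directly by the same pullback construction you already perform at $n=1$ (pull back a shortest $\mathcal{X}$-resolution of $M$ along $X_0\to M$, apply (ET4)$^{\mathrm{op}}$ and extension/cocone closure, then splice). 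You should insert a remark to this effect. A small aside: the symmetric pullback of two deflations does not require Condition~(WIC); base change \cite[Proposition~3.15]{Na} together with (ET4)$^{\mathrm{op}}$ already yields both $\mathbb{E}$-triangles.
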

\begin{proof}
The proof is similar to \cite[Proposition 3.2]{MZ21R}.
\end{proof}

\begin{lemma}\label{lem-j_{!}}
Let $(\mathcal{A},\mathcal{B},\mathcal{C})$ be a recollement of extriangulated categories  as the diagram (\ref{recolle}), and let $\mathcal{X_{B}}$ and $\mathcal{X_{C}}$ be resolving subcategories of $\mathcal{B}$ and
$\mathcal{C}$ respectively. Let $C\in \mathcal{C}$. Then
$$\mathcal{X_{B}}\text{-} \res j_{!}(C) \leq  \mathcal{X_{C}}\text{-}\res C+ \max\{\mathcal{X_{B}}\text{-}\res i_{*}(\mathcal{A}), \mathcal{X_{B}}\text{-}\res j_{!}(\mathcal{X_{C}})\} + 1.$$
\end{lemma}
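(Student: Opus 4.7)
The plan is to proceed by induction on $n := \mathcal{X_C}\text{-}\res C$, assuming both $n$ and $m := \max\{\mathcal{X_B}\text{-}\res i_*(\mathcal{A}), \mathcal{X_B}\text{-}\res j_!(\mathcal{X_C})\}$ are finite (otherwise the bound is vacuous). The base case $n=0$ is immediate: $C \in \mathcal{X_C}$ forces $j_!(C) \in j_!(\mathcal{X_C})$, so $\mathcal{X_B}\text{-}\res j_!(C) \leq m \leq m+1$.

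For the inductive step, I would extract from a length-$n$ $\mathcal{X_C}$-resolution of $C$ its first piece, namely an $\mathbb{E}$-triangle $K \to X \to C \dashrightarrow$ in $\mathcal{C}$ with $X \in \mathcal{X_C}$ and $\mathcal{X_C}\text{-}\res K \leq n-1$. The pivotal step is then to apply the right exact functor $j_!$ to this $\mathbb{E}$-triangle. By the form of right exactness for functors between extriangulated categories (cf.\ the shape of (R5) in Definition \ref{def-rec} and \cite{WWZ20R}), this unfolds as an intermediate object $L \in \mathcal{B}$ together with a correction term $W \in \mathcal{B}$ sitting in $\mathbb{E}$-triangles
$$W \to j_!(K) \to L \dashrightarrow \quad \text{and} \quad L \to j_!(X) \to j_!(C) \dashrightarrow .$$
To locate $W$, I would apply the exact functor $j^*$: using $j^*j_! \cong \Id_{\mathcal{C}}$ from Lemma \ref{lem-rec}(1), the second $\mathbb{E}$-triangle becomes $j^*(L) \to X \to C \dashrightarrow$, which identifies $j^*(L) \cong K$; feeding this back into $j^*$ applied to the first $\mathbb{E}$-triangle collapses it to $j^*(W) \to K \to K \dashrightarrow$, forcing $j^*(W) = 0$. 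Hence by (R2) and the full faithfulness of $i_*$, $W \cong i_*(A)$ for some $A \in \mathcal{A}$.

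The remainder is arithmetic with Lemma \ref{prop-resdim}. The inductive hypothesis gives $\mathcal{X_B}\text{-}\res j_!(K) \leq (n-1) + m + 1 = n + m$, while $\mathcal{X_B}\text{-}\res i_*(A) \leq m$ by definition of $m$. Applying Lemma \ref{prop-resdim}(3) to $W \to j_!(K) \to L \dashrightarrow$ yields $\mathcal{X_B}\text{-}\res L \leq \max\{m+1, n+m\} = n + m$ (using $n \geq 1$); Lemma \ref{prop-resdim}(3) applied to $L \to j_!(X) \to j_!(C) \dashrightarrow$, together with $\mathcal{X_B}\text{-}\res j_!(X) \leq m$, then gives $\mathcal{X_B}\text{-}\res j_!(C) \leq \max\{(n+m)+1, m\} = n + m + 1$, as required. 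The main obstacle I anticipate is the middle paragraph: carefully extracting the two $\mathbb{E}$-triangles from the right exactness of $j_!$ on a single $\mathbb{E}$-triangle, and pinning down the correction term $W$ inside $i_*(\mathcal{A})$ via $(R2)$; the subsequent resolution-dimension bookkeeping is entirely formal.
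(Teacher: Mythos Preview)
Your approach is essentially the same as the paper's: induction on $\mathcal{X_C}\text{-}\res C$, right exactness of $j_!$ producing two $\mathbb{E}$-triangles, identifying the correction term as an object of $i_*(\mathcal{A})$ via (R2), and then bookkeeping with Lemma~\ref{prop-resdim}. One harmless difference: the paper first replaces the $\mathcal{X_C}$-resolution by a projective one using Lemma~\ref{omega}, so that $j_!(P_0)\in\mathcal{P(B)}\subseteq\mathcal{X_B}$; your choice of an arbitrary $X\in\mathcal{X_C}$ works just as well since $\mathcal{X_B}\text{-}\res j_!(X)\leq m$ is already built into the definition of $m$.

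The one point where your sketch is too fast is exactly the spot you flagged. From the $\mathbb{E}$-triangle $j^*(W)\to j^*j_!(K)\xrightarrow{j^*(h_1)} j^*(L)\dashrightarrow$, knowing abstractly that $j^*(L)\cong K$ (as cocones of the same deflation $X\to C$) does not by itself force the \emph{map} $j^*(h_1)$ to be an isomorphism, and hence does not immediately give $j^*(W)=0$. The paper closes this as follows: the factorization $j_!(f)=h_2h_1$ yields $f=j^*(h_2)\circ j^*(h_1)$ after applying $j^*$ and using $j^*j_!\cong\Id_{\mathcal{C}}$; since $f$ is an inflation, Condition~\ref{WIC}(1) makes $j^*(h_1)$ an inflation; since $h_1$ is a compatible deflation and $j^*$ is exact, $j^*(h_1)$ is also a compatible deflation; hence $j^*(h_1)$ is an isomorphism and $j^*(W)=0$. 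With that step filled in, your argument goes through verbatim.
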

\begin{proof}
If $\mathcal{X_{C}}\text{-}\res C=\infty$ or $\max\{\mathcal{X_{B}}\text{-}\res i_{*}(\mathcal{A}), \mathcal{X_{B}}\text{-}\res j_{!}(\mathcal{X_{C}})\}=\infty$,
there is
nothing to prove.
Assume that
$\max\{\mathcal{X_{B}}\text{-}\res i_{*}(\mathcal{A}), \mathcal{X_{B}}\text{-}\res j_{!}(\mathcal{X_{C}})\}=m$.
The proof will be proceed by induction on the $\mathcal{X_{C}}$-resolution dimension of $C$.
If $C\in \mathcal{X_{C}}$, the assertion holds obviously.
Now suppose that $\mathcal{X_{C}}\text{-}\res C=n\geq 1$.
By
Lemma \ref{omega}, we have the following $\mathbb{E}$-triangle sequence

$$\xymatrix@C=15pt{X_{n}\ar[rr]&&P_{n-1}\ar[rr]&&\cdots\ar[rr]&&P_{2}\ar[rr]&&P_{1}\ar[rr]\ar[rd]&&P_{0}\ar[rr]^{g}&&C\\
&&&&&&&&&K_{1}\ar[ru]_{f}&&&}$$
with $P_{i}\in \mathcal{P(C)}\subseteq \mathcal{X_C}$ for $0\leq i\leq n-1$ and $X_{n}\in \mathcal{X_C}$.

Notice that $\mathcal{X_{C}}\text{-}\res K_{1}\leq n-1$, by induction hypothesis, we have
$$\mathcal{X_{B}}\text{-}\res j_{!}(K_{1})\leq \mathcal{X_{C}}\text{-}\res K_{1}+m+1\leq n-1+m+1=m+n.$$
Since $j_{!}$ is right exact, there is an $\mathbb{E}$-triangle $K_{1}'\stackrel{h_{2}}{\longrightarrow}j_{!}(P_{0})\stackrel{j_{!}(g)}{\longrightarrow}j_{!}(C)\stackrel{}\dashrightarrow$
in $\mathcal{B}$ and a deflation $\xymatrix@C=15pt{h_{1}:j_{!}(K_{1})\ar[r]&K'_{1}}$ which is compatible, such that $j_{!}(f)=h_{2}h_{1}$.

Since $j^{*}j_{!}= {\rm Id}_{\mathcal{C}}$ by Lemma \ref{lem-rec},
$$\xymatrix@C=20pt{j^{*}j_{!}(K_{1})\ar[r]^{j^{*}j_{!}(f)}&j^{*}j_{!}(P_{0})\ar[r]^{j^{*}j_{!}(g)}&j^{*}j_{!}(C)\ar@{-->}[r]&}$$
is an $\mathbb{E}$-triangle in $\mathcal{C}$.
Since $f=j^{*}j_{!}(f)=(j^{*}(h_{2}))(j^{*}(h_{1}))$, so $j^{*}(h_{1})$ is an inflation by Condition \ref{WIC}.
Notice that $j^{*}(h_{1})$ is a deflation and compatible since $j^{*}$ is exact, so $j^{*}(h_{1})$ is an isomorphism.
Thus  $j^{*}j_{!}(K_{1})\cong j^{*}(K'_{1})$.
Set $K''_{1}=\cocone (h_{1})$, consider the following $\mathbb{E}$-triangle
\begin{align}\label{E-triangle-1}
\xymatrix@C=20pt{K''_{1}\ar[r]&j_{!}(K_{1})\ar[r]^{h_1}&K'_{1}\ar@{-->}[r]&}
\end{align}
in $\mathcal{B}$.
Since $j^{*}$ is exact,  $j^{*}(K''_{1})=0$. By (R2), there exists an object $A'\in \mathcal{A}$ such that $K''_{1}\cong i_{*}(A')$.
Then $\mathcal{X_{B}}\text{-}\res K''_{1}\leq m$ by assumption.
Apply Lemma \ref{prop-resdim} to the $\mathbb{E}$-triangle (\ref{E-triangle-1}),
we have $\mathcal{X_{B}}\text{-}\res K'_{1}\leq m+n$.
Notice that $j_{!}$ preserves projectives by Lemma \ref{lem-rec}, so $j_{!}(P_{0})\in \mathcal{P(B)}(\subseteq \mathcal{X}_{\mathcal{B}})$.
It follows that $\mathcal{X_{B}}\text{-}\res  j_{!}(C)\leq m+n+1$.
\end{proof}

Now we  give the main theorem.
\begin{theorem}\label{main-dim}
Let $(\mathcal{A},\mathcal{B},\mathcal{C})$ be a recollement of extriangulated categories as the diagram (\ref{recolle}), and assume that
$\mathcal{B}$ and $\mathcal{C}$ have enough projective objects. Let $\mathcal{X_{A}}$, $\mathcal{X_{B}}$ and $\mathcal{X_{C}}$ be resolving subcategories
of $\mathcal{A},\ \mathcal{B}$ and $\mathcal{C}$, respectively. Then we have the following statements.
\begin{itemize}
\item[(1)] $\mathcal{X_{B}}\text{-}\res \mathcal{B} \leq  \mathcal{X_{C}}\text{-}\res \mathcal{C}+\max\{\mathcal{X_{B}}\text{-}\res  i_{*}(\mathcal{A}), \mathcal{X_{B}}\text{-}\res  j_{!}(\mathcal{X_{C}})\} + 1.$
\item[(2)] $\mathcal{X_{B}}\text{-}\res i_{*}(\mathcal{A})\leq \mathcal{X_{A}}\text{-}\res \mathcal{A}+\mathcal{X_{B}}\text{-}\res i_{*}(\mathcal{X_{A}})$.

\item[(3)] $\mathcal{X_{B}}\text{-}\res \mathcal{B} \leq \mathcal{X_{A}}\text{-}\res \mathcal{A}+ \mathcal{X_{C}}\text{-}\res \mathcal{C}+\max\{\mathcal{X_{B}}\text{-}\res i_{*}(\mathcal{X_{A}}), \mathcal{X_{B}}\text{-}\res  j_{!}(\mathcal{X_{C}})\} + 1.$

\item[(4)] $\mathcal{X_{C}}\text{-}\res \mathcal{C}\leq \mathcal{X_{B}}\text{-}\res \mathcal{B}+ \mathcal{X_{C}}\text{-}\res  j^{*}(\mathcal{X_{B}})$.

\item[(5)] If $j_{!}(\mathcal{X_{C}})\subseteq \mathcal{X_{B}}$ or $j_{*}(\mathcal{X_{C}})\subseteq \mathcal{X_{B}}$, then
$$\mathcal{X_{B}}\text{-}\res \mathcal{B}\leq \mathcal{X_{C}}\text{-}\res \mathcal{C}+\mathcal{X_{B}}\text{-}\res i_{*}(\mathcal{A})+1$$
and
$$\mathcal{X_{B}}\text{-}\res \mathcal{B}\leq \mathcal{X_{C}}\text{-}\res \mathcal{C}+ \mathcal{X_{A}}\text{-}\res \mathcal{A}+\mathcal{X_{B}}\text{-}\res i_{*}(\mathcal{X_A})+1.$$

\item[(6)] If $i_{*}(\mathcal{X_{A}}) \subseteq \mathcal{X_{B}}$, then
$\mathcal{X_{B}}\text{-}\res i_{*}(\mathcal{A})\leq \mathcal{X_{A}}\text{-}\res \mathcal{A}$.

\item[(7)] If $j_{!}(\mathcal{X_{C}}) \subseteq \mathcal{X_{B}}$ (or $j_{*}(\mathcal{X_{C}}) \subseteq{\mathcal{X_{B}}}$)  and $i_{*}(\mathcal{X_{A}}) \subseteq\mathcal{X_{B}}$, then
$$\mathcal{X_{B}}\text{-}\res \mathcal{B}\leq \mathcal{X_{A}}\text{-}\res \mathcal{A}+\mathcal{X_{C}}\text{-}\res \mathcal{C}+1.$$

\item[(8)] If $j^{*}(\mathcal{X_{B}}) \subseteq \mathcal{X_{C}}$, then
$\mathcal{X_{C}}\text{-}\res \mathcal{C}\leq \mathcal{X_{B}}\text{-}\res \mathcal{B}$.

\end{itemize}

\end{theorem}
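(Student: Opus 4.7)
The plan is to treat the eight items as a single package, all resting on Lemma \ref{lem-j_{!}} and Lemma \ref{prop-resdim} combined with the canonical $\mathbb{E}$-triangles or $\mathbb{E}$-triangle sequences supplied by (R4), (R5), and Lemma \ref{lem-rec}(7), (7'). I would dispatch the ``transfer along one functor'' items (2), (4), (6), (8) first, since these are the building blocks, and then attack the ``gluing'' items (1), (3), (5), (7).

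For (2), take a shortest $\mathcal{X_{A}}$-resolution of $A\in\mathcal{A}$; since $i_*$ is exact, pushing through yields an $i_*(\mathcal{X_{A}})$-resolution of $i_*(A)$ of the same length, and each term $i_*(X_j)$ admits in turn a $\mathcal{X_{B}}$-resolution of length bounded by $\mathcal{X_{B}}\text{-}\res i_*(\mathcal{X_{A}})$; iterated splicing via Lemma \ref{prop-resdim} delivers the sum bound. Item (6) is the specialization where $i_*(\mathcal{X_{A}})\subseteq\mathcal{X_{B}}$ collapses the splicing step. Item (8) is the exact dual: pick a preimage $B$ with $j^*(B)=C$ (possible because $j^*$ is dense), resolve $B$ over $\mathcal{X_{B}}$, and push through the exact functor $j^*$. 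Item (4) runs the same argument as (2) with $j^*$ in place of $i_*$.

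For the gluing item (1), take $B\in\mathcal{B}$ and apply Lemma \ref{lem-j_{!}} to $j^*(B)$ to bound $\mathcal{X_{B}}\text{-}\res j_!j^*(B)$; then combine this with the right exact $\mathbb{E}$-triangle sequence of (R5), whose remaining constituents lie in $i_*(\mathcal{A})$, using Lemma \ref{prop-resdim} on the two resulting $\mathbb{E}$-triangles. Item (3) is (1) plus (2). For (5), the hypothesis $j_!(\mathcal{X_{C}})\subseteq\mathcal{X_{B}}$ forces $\mathcal{X_{B}}\text{-}\res j_!(\mathcal{X_{C}})=0$, so the first inequality is immediate from (1); item (7) then combines (5) with (6).

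The main obstacle is the $j_*$ branch of item (5), since Lemma \ref{lem-j_{!}} is stated only for $j_!$. I would prove the parallel bound $\mathcal{X_{B}}\text{-}\res j_*(C)\leq \mathcal{X_{C}}\text{-}\res C+\max\{\mathcal{X_{B}}\text{-}\res i_*(\mathcal{A}),\mathcal{X_{B}}\text{-}\res j_*(\mathcal{X_{C}})\}+1$ by a dual induction: choose a projective presentation of $C$ with syzygy $K_1$, analyse the morphism $j_*(f)$ using the left exactness of $j_*$ and Condition (WIC), and observe that the resulting third term has vanishing $j^*$-image, hence by (R2) lies in $i_*(\mathcal{A})$. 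Under $j_*(\mathcal{X_{C}})\subseteq\mathcal{X_{B}}$ that term vanishes, and the $j_*$ half of (5) follows via the left exact $\mathbb{E}$-triangle sequence of (R4) (or Lemma \ref{lem-rec}(7)) together with one further application of Lemma \ref{prop-resdim}.
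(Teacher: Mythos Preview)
Your proposal is correct and, for items (1)--(4), (6)--(8), and the $j_!$-half of (5), it follows essentially the same path as the paper: the paper likewise derives (1) from Lemma~\ref{lem-j_{!}} together with the two $\mathbb{E}$-triangles coming from (R5), proves (2) and (4) by pushing a resolution through the exact functor $i_*$ resp.\ $j^*$ (using $j_!(C)$ as the preimage for (4)), and obtains (3), (6), (7), (8) as immediate specializations.

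The one place where you diverge is the $j_*$-half of (5). You propose to develop a $j_*$-analogue of Lemma~\ref{lem-j_{!}} by a dual induction on $\mathcal{X_C}\text{-}\res C$ and then feed the resulting bound on $\mathcal{X_B}\text{-}\res j_*j^*(B)$ into the (R4) sequence for a general $B$. This works, but the paper avoids the extra induction entirely: it instead applies (R4) to $B=j_!(M)$ for $M\in\mathcal{X_C}$, observes that the third term $j_*j^*j_!(M)\cong j_*(M)$ lies in $\mathcal{X_B}$ by hypothesis, and reads off $\mathcal{X_B}\text{-}\res j_!(M)\leq \mathcal{X_B}\text{-}\res i_*(\mathcal{A})$ from two applications of Lemma~\ref{prop-resdim}. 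Thus $\mathcal{X_B}\text{-}\res j_!(\mathcal{X_C})\leq \mathcal{X_B}\text{-}\res i_*(\mathcal{A})$ and part~(1) applies directly. Your approach is more symmetric (it puts $j_!$ and $j_*$ on equal footing) and yields a standalone $j_*$-lemma that could be reused elsewhere; the paper's trick is shorter and stays entirely within the $j_!$ framework already set up, at the cost of being a one-off argument.
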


\begin{proof}

(1)
Suppose that
$\max\{\mathcal{X_{B}}\text{-}\res i_{*}(\mathcal{A}),\mathcal{X_{B}}\text{-}\res j_{!}(\mathcal{X_{C}})\} = m < \infty$ and $\mathcal{X_{C}}\text{-}\res \mathcal{C}= n <\infty$.
Let $B\in \mathcal{B}$.
By (R5),
there exists a commutative diagram
\begin{equation*}
\xymatrix{
  &i_{*}(A') \ar[r]&j_{!}j^{*}(B)\ar[rr]^-{\upsilon_B}\ar[dr]_{h_{2}}&  &B\ar[r]^-{\nu_B}&i_{*}i^{*}(B) &\\
           &                &       &  B' \ar[ur]_{h_{1}}& }
\end{equation*}
in $\mathcal{B}$ such that $i_{*}(A')\stackrel{}{\longrightarrow}j_{!}j^{*}(B)\stackrel{h_{2}}{\longrightarrow}B'\stackrel{}\dashrightarrow$ and $B'\stackrel{h_{1}}{\longrightarrow}B{\longrightarrow}i_{*}i^{*}(B)\stackrel{}\dashrightarrow$ are $\mathbb{E}$-triangles and $h_{2}$ is compatible.
Notice that
$\mathcal{X_{B}}\text{-}\res i_{*}(A')\leq m$ and $\mathcal{X_{B}}\text{-}\res i_{*}i^{*}(B)\leq m$.
By Lemmas \ref{prop-resdim} and \ref{lem-j_{!}},
\begin{align*}
\mathcal{X_{B}}\text{-}\res B&\leq \max\{\mathcal{X_{B}}\text{-}\res B',\mathcal{X_{B}}\text{-}\res i_{*}i^{*}(B)\}\\
&\leq \max\{\mathcal{X_{B}}\text{-}\res i_{*}(A')+1,\mathcal{X_{B}}\text{-}\res j_{!}j^{*}(B),\mathcal{X_{B}}\text{-}\res i_{*}i^{*}(B)\}\\
&\leq \max\{m+1,\mathcal{X_{C}}\text{-}\res j^{*}(B)+m+1,m\}
\end{align*}
Notice that $ \mathcal{X_{C}}\text{-}\res j^{*}(B)\leq n$, so $\mathcal{X_{B}}\text{-}\res B\leq m+n+1$.

(2)
 Suppose that $\mathcal{X_{B}}\text{-}\res i_{*}(\mathcal{X_{A}})=n<\infty$ and $\mathcal{X_{A}}\text{-}\res \mathcal{A}=m< \infty$.
Let $A\in \mathcal{A}$.
If $A\in \mathcal{X_{A}}$, then $\mathcal{X_{B}}\text{-}\res  i_{*}(A)\leq n$ and our result holds.
Now suppose that $\mathcal{X_{A}}\text{-}\res A=s\leq m$.
Consider the following $\mathbb{E}$-triangle sequence
$$\xymatrix{X'_{s}\ar[r]&X'_{s-1}\ar[r]&\cdots\ar[r]&X'_{1}\ar[r]&X'_{0}\ar[r]&A}$$
in $\mathcal{A}$ with $X'_{i}\in \mathcal{X_{A}}$ for $0\leq i\leq s$.
Since $i_{*}$ is exact,
$$\xymatrix{i_{*}(X'_{s})\ar[r]&i_{*}(X'_{s-1})\ar[r]&\cdots\ar[r]&i_{*}(X'_{1})\ar[r]&i_{*}(X'_{0})\ar[r]&i_{*}(A)}$$
is an $\mathbb{E}$-triangle sequence in $\mathcal{B}$.
Notice that $\mathcal{X_{B}}\text{-}\res  i_{*}(X'_{i})\leq n$ by assumption,
so $\mathcal{X_{B}}\text{-}\res  i_{*}(A)\leq s+n \leq m+n$ by Lemma \ref{prop-resdim}.

(3) It follows from  (1) and (2).

(4)
Suppose that $\mathcal{X_{B}}\text{-}\res \mathcal{B}= m<\infty$ and $\mathcal{X_{C}}\text{-}\res j^{*}(\mathcal{X_{B}})=n<\infty$.
For any object $C\in \mathcal{C}$, $j_{!}(C)\in \mathcal{B}$. Assume that $\mathcal{X_{B}}\text{-}\res j_!(C)=s\leq m$,
and consider the following $\mathbb{E}$-triangle sequence
$$\xymatrix{X_{s}\ar[r]&X_{s-1}\ar[r]&\cdots\ar[r]&X_{1}\ar[r]&X_{0}\ar[r]&j_{!}(C)}$$
in $\mathcal{B}$ with $X_{i}\in \mathcal{X_B}$ for $0\leq i\leq s$.

Since $j^{*}$ is exact,
$$\xymatrix{j^{*}(X_{s})\ar[r]&j^{*}(X_{s-1})\ar[r]&\cdots\ar[r]&j^{*}(X_{1})\ar[r]&j^{*}(X_{0})\ar[r]&C(\cong j^{*}j_{!}(C))}$$
is an $\mathbb{E}$-triangle sequence in $\mathcal{C}$.
Notice that $\mathcal{X_C}\text{-}\res j^{*}(X_{i})\leq n$ by assumption,
so $\mathcal{X_C}\text{-}\res C\leq s+n \leq m+n$ by Lemma \ref{prop-resdim}.

(5)   If $j_{!}(\mathcal{X_{C}})\subseteq \mathcal{X_{B}}$, then $\mathcal{X_{B}}\text{-}\res j_{!}(\mathcal{X_{C}})=0$.
 So by (1)
 $$\mathcal{X_{B}}\text{-}\res \mathcal{B}\leq \mathcal{X_{C}}\text{-}\res \mathcal{C}+\mathcal{X_{B}}\text{-}\res i_{*}(\mathcal{A})+1.$$

If $j_{*}(\mathcal{X_{C}})\subseteq \mathcal{X_{B}}$, for every object $M\in \mathcal{X_{C}}$, $j_{!}(M)\in  \mathcal{B}$.
By (R4),
there exists a commutative diagram
\begin{equation*}
\xymatrix{
  &i_{*}i^{!}(j_{!}(M)) \ar[r]^-{\theta_{j_{!}(M)}}&j_{!}(M)\ar[rr]^-{\vartheta_{j_{!}(M)}}\ar[dr]_{h'_{2}}&  &j_{*}j^{*}(j_{!}(M))\ar[r]^-{h'}&i_{*}(A'') &\\
           &                &       &  B'' \ar[ur]_{h'_{1}}& }
\end{equation*}
in $\mathcal{B}$ such that $i_{*}i^{!}(j_{!}(M))\stackrel{}{\longrightarrow}j_{!}(M)\stackrel{h'_{2}}{\longrightarrow}B''\stackrel{}\dashrightarrow$ and $B''\stackrel{h'_{1}}{\longrightarrow}j_{*}j^{*}(j_{!}(M))\stackrel{h'}{\longrightarrow}i_{*}(A'')\stackrel{}\dashrightarrow$ are $\mathbb{E}$-triangles and $h'_{1}$ is compatible.
Notice that $j_{*}j^{*}(j_{!}(M))\cong j_{*}(M)\in \mathcal{X_{B}}$ by Lemma \ref{lem-rec} and assumption, so $\mathcal{X_{B}}\text{-}\res j_{*}j^{*}(j_{!}(M))=0$.
Then by Lemma \ref{prop-resdim},
\begin{align*}
\mathcal{X_{B}}\text{-}\res j_{!}(M)&\leq \max\{ \mathcal{X_{B}}\text{-}\res i_{*}(\mathcal{A}), \mathcal{X_{B}}\text{-}\res B''\}\\
&\leq \max\{ \mathcal{X_{B}}\text{-}\res i_{*}(\mathcal{A}), \mathcal{X_{B}}\text{-}\res i_{*}(\mathcal{A})-1\}\\
&\leq \mathcal{X_{B}}\text{-}\res i_{*}(\mathcal{A}).
\end{align*}
So $\mathcal{X_{B}}\text{-}\res j_{!}(\mathcal{X_C})\leq \mathcal{X_B}\text{-}\res i_{*}(\mathcal{A})$.
Also, by (1), we get
 $$\mathcal{X_{B}}\text{-}\res \mathcal{B}\leq \mathcal{X_{C}}\text{-}\res \mathcal{C}+\mathcal{X_{B}}\text{-}\res i_{*}(\mathcal{A})+1.$$

The last assertion follows from (3).

(6)
If $i_{*}(\mathcal{X_{A}}) \subseteq \mathcal{X_{B}}$, then
$\mathcal{X_{B}}\text{-}\res i_{*}(\mathcal{X_{A}})=0$.
The desired assertion follows from (2).

(7)
It follows from (5) and (6).

(8)
If $j^{*}(\mathcal{X_{B}})\subseteq \mathcal{X_{C}}$, then $\mathcal{X_{C}}\text{-}\res j^{*}(\mathcal{X_{B}})=0$.
 So
 $\mathcal{X_{C}}\text{-}\res \mathcal{C}\leq \mathcal{X_{B}}\text{-}\res \mathcal{B}$
 by (4).
\end{proof}

\begin{remark}
One can get the result \cite[Theorem 3.7]{ZHJ1} by applying Theorem \ref{main-gl} to abelian categories.
\end{remark}

Take $\mathcal{X_{A}}=\mathcal{P(A)}$, $\mathcal{X_{B}}=\mathcal{P(B)}$ and $\mathcal{X_{C}}=\mathcal{P(C)}$ in Theorem \ref{main-dim}.
%Notice that $j_{!}(\mathcal{P(C)})\subseteq \mathcal{P(B)}$,
We have

\begin{corollary}{\rm (\cite[Theorem 3.5]{GMT})}\label{main-gl}
Let $(\mathcal{A},\mathcal{B},\mathcal{C})$ be a recollement of extriangulated categories  as the diagram (\ref{recolle}), and let $C\in\mathcal{C}$. Then we have the following statements.
\begin{itemize}
\item[(1)]
\begin{itemize}
\item[(a)] ${\gl \mathcal{B}}\leq \mathcal{P(B)}\text{-}\res i_{*}(\mathcal{A})+\gl \mathcal{C}+1$.
\item[(b)] $\mathcal{P(B)}\text{-}\res i_{*}(\mathcal{A})\leq \gl \mathcal{A}+\sup\{{\pd_{\mathcal{B}}}i_{*}(P)\mid P\in \mathcal{P}(\mathcal{A})\}$.
\item[(c)] $\gl\mathcal{B}\leq \gl \mathcal{A}+\gl\mathcal{C}+\sup\{\pd_{\mathcal{B}}i_{*}(P)\mid P\in \mathcal{P(A)}\}+1.$
\item [(d)] $\gl \mathcal{C}\leq \gl \mathcal{B}+\sup\{{\pd_{\mathcal{C}}}j^{*}(P)\mid P\in \mathcal{P}(\mathcal{B})\}$.
\end{itemize}

\item[(2)] Assume that $i^{!}$ is exact, then
\begin{itemize}
\item[(a)] $\gl\mathcal{B}\leq \gl \mathcal{A}+\gl\mathcal{C}+1.$
\item[(b)] $\gl \mathcal{C}\leq \gl\mathcal{B}.$
\end{itemize}
\end{itemize}

\end{corollary}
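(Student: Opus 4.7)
The plan is to specialize Theorem~\ref{main-dim} by taking $\mathcal{X_{A}}=\mathcal{P(A)}$, $\mathcal{X_{B}}=\mathcal{P(B)}$ and $\mathcal{X_{C}}=\mathcal{P(C)}$. With these choices the relative resolution dimensions collapse to ordinary global and projective dimensions: $\mathcal{X_{A}}\text{-}\res\mathcal{A}=\gl\mathcal{A}$, and $\mathcal{X_{B}}\text{-}\res i_{*}(\mathcal{X_{A}})=\sup\{\pd_{\mathcal{B}}i_{*}(P)\mid P\in\mathcal{P(A)}\}$, with the analogous rewrites for $\mathcal{B}$ and $\mathcal{C}$. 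A key preliminary observation is that Lemma~\ref{lem-rec}(3$'$) guarantees $j_{!}(\mathcal{P(C)})\subseteq\mathcal{P(B)}$ unconditionally, so $\mathcal{X_{B}}\text{-}\res j_{!}(\mathcal{X_{C}})=0$ in every item below. This single fact collapses each ``$\max$'' appearing in Theorem~\ref{main-dim}(1) and (3) to its other term.

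For part~(1) I would simply match each inequality to the corresponding item of Theorem~\ref{main-dim}: (a) is read off from part~(1) using $\mathcal{X_{B}}\text{-}\res j_{!}(\mathcal{X_{C}})=0$; (b) is part~(2); (c) follows from part~(3) (or by composing (a) and (b)); and (d) follows from part~(4). No extra hypothesis on the recollement is needed here, because the additional ``if \ldots'' clauses invoked in Theorem~\ref{main-dim}(5)--(8) are not used for part~(1).

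For part~(2), the hypothesis ``$i^{!}$ exact'' provides exactly the two inclusions required to kill the remaining error terms. By Lemma~\ref{lem-rec}(4), $i^{!}$ exact forces $i_{*}$ to preserve projectives, so $i_{*}(\mathcal{X_{A}})\subseteq\mathcal{X_{B}}$; feeding this (together with the already-noted $j_{!}(\mathcal{X_{C}})\subseteq\mathcal{X_{B}}$) into Theorem~\ref{main-dim}(7) yields (a). By Lemma~\ref{lem-rec}(6), $i^{!}$ exact also forces $j_{*}$ exact, whence Lemma~\ref{lem-rec}(4) applied to $j_{*}$ gives that $j^{*}$ preserves projectives, i.e.\ $j^{*}(\mathcal{X_{B}})\subseteq\mathcal{X_{C}}$; plugging this into Theorem~\ref{main-dim}(8) delivers (b). The argument is a purely formal specialization and I do not anticipate any genuine obstacle; the only point requiring care is bookkeeping — matching each bound to the correct item of Theorem~\ref{main-dim} and verifying that each needed subcategory-inclusion is either built in (for $j_{!}$) or supplied by the exactness of $i^{!}$ via Lemma~\ref{lem-rec}.
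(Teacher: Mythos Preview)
Your proposal is correct and follows essentially the same route as the paper. The only cosmetic difference is that for (1)(a) and (1)(c) you invoke Theorem~\ref{main-dim}(1) and (3) directly (using $\mathcal{X_{B}}\text{-}\res j_{!}(\mathcal{X_{C}})=0$ to collapse the $\max$), whereas the paper cites Theorem~\ref{main-dim}(5), which is precisely the specialization of (1) and (3) under the inclusion $j_{!}(\mathcal{X_{C}})\subseteq\mathcal{X_{B}}$; the underlying argument is identical.
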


\begin{proof}
(1)
(a) Since $j_{!}(\mathcal{P(C)})\subseteq \mathcal{P(B)}$ by Lemma \ref{lem-rec}, the assertion follows from Theorem \ref{main-dim}(5).

(b) It follows from Theorem \ref{main-dim}(2).

(c) It follows from Theorem \ref{main-dim}(5) (or (a) and (b)).

(d) It follows from Theorem \ref{main-dim}(4).

(2)
(a) Since $i^{!}$ is exact by assumption, $i_{*}(\mathcal{P(A)})\subseteq \mathcal{P(B)}$ by Lemma \ref{lem-rec}.
Notice that $j_{!}(\mathcal{P(C)})\subseteq \mathcal{P(B)}$.
The assertion follows from Theorem \ref{main-dim}(7).

(b) Since $i^{!}$ is exact by assumption, $j^{*}(\mathcal{P(B)})\subseteq \mathcal{P(C)}$ by Lemma \ref{lem-rec}.
The assertion follows from Theorem \ref{main-dim}(8).
\end{proof}

\begin{remark}In the sense of \cite{GMT}
$$\mathcal{P_{B}}\text{-}\res i_{*}(\mathcal{A})=\sup\{\pd_{\mathcal{B}}i_{*}(A)\mid A\in \mathcal{A}\}$$
is denoted by $\gl_{\mathcal{A}}\mathcal{B}$.
\end{remark}

Under some conditions, one can take special resolving subcategories $\mathcal{X_A}=\mathcal{X'}$, $\mathcal{X_B}=\mathcal{X}$ and $\mathcal{X_C}=\mathcal{X''}$ of $\mathcal{A}$, $\mathcal{B}$ and $\mathcal{C}$ respectively,
as in Theorem \ref{main-resolving}, then we have

\begin{theorem}\label{dim-=}
Let $(\mathcal{A},\mathcal{B},\mathcal{C})$ be a recollement of extriangulated categories as the diagram (\ref{recolle}).
Assume that $i^{*}$ is exact and $j^{*}$ preserves projective objects, and
assume that $\mathcal{X'}$ and $\mathcal{X''}$ are resolving subcategories of $\mathcal{A}$ and $\mathcal{C}$ respectively,
then
$$\mathcal{X}\text{-}\res\mathcal{B}= \max\{\mathcal{X'}\text{-}\res \mathcal{A},\mathcal{X''}\text{-}\res \mathcal{C}\},$$
where
$\mathcal{X}:=\{B\in \mathcal{B}\mid i^{*}(B)\in \mathcal{X'}\ \text{and} \ j^{*}(B)\in \mathcal{X''}\}$.
\end{theorem}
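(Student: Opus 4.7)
The plan is to prove the equality by establishing each inequality separately, using Theorem \ref{main-resolving} as the foundation.

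First, I would verify that $\mathcal{X}$ is indeed a resolving subcategory of $\mathcal{B}$ (which is exactly Theorem \ref{main-resolving}), giving us the crucial closure properties $i_*(\mathcal{X'}) \subseteq \mathcal{X}$, $j_!(\mathcal{X''}) \subseteq \mathcal{X}$, $i^*(\mathcal{X}) = \mathcal{X'}$, and $j^*(\mathcal{X}) = \mathcal{X''}$. I also note that since $i^*$ is exact by hypothesis, Lemma \ref{lem-rec}(6$'$) guarantees that $j_!$ is exact as well; this will be needed to push resolutions through $j_!$.

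For the inequality $\max\{\mathcal{X'}\text{-}\res\mathcal{A},\,\mathcal{X''}\text{-}\res\mathcal{C}\} \leq \mathcal{X}\text{-}\res\mathcal{B}$, I would take any $A\in\mathcal{A}$ and any $\mathcal{X}$-resolution of $i_*(A)$, then apply the exact functor $i^*$ and use $i^*i_* \cong \Id_{\mathcal{A}}$ together with $i^*(\mathcal{X}) = \mathcal{X'}$ to obtain an $\mathcal{X'}$-resolution of $A$ of the same length. The same argument with $j_!(C)$ in place of $i_*(A)$, applying the exact functor $j^*$ and using $j^*j_! \cong \Id_{\mathcal{C}}$ together with $j^*(\mathcal{X}) = \mathcal{X''}$, yields the $\mathcal{C}$-side bound. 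This part is formally a direct application of Theorem \ref{main-dim}(4) on the $\mathcal{C}$-side, and an entirely analogous argument on the $\mathcal{A}$-side.

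For the reverse inequality $\mathcal{X}\text{-}\res\mathcal{B} \leq \max\{\mathcal{X'}\text{-}\res\mathcal{A},\,\mathcal{X''}\text{-}\res\mathcal{C}\}$, let $k$ denote the right-hand side (assumed finite, otherwise nothing to prove). Fix $B\in\mathcal{B}$. Since $i^*$ is exact, Lemma \ref{lem-rec}(7$'$) supplies the $\mathbb{E}$-triangle
$$\xymatrix@C=15pt{j_!j^*(B)\ar[r]&B\ar[r]&i_*i^*(B)\ar@{-->}[r]&.}$$
Now $\mathcal{X'}\text{-}\res i^*(B) \leq k$, so an $\mathcal{X'}$-resolution of $i^*(B)$ of length $\leq k$ pushed forward by the exact functor $i_*$ gives an $\mathcal{X}$-resolution of $i_*i^*(B)$ (using $i_*(\mathcal{X'}) \subseteq \mathcal{X}$). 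Similarly, $\mathcal{X''}\text{-}\res j^*(B) \leq k$, and applying the exact functor $j_!$ to an $\mathcal{X''}$-resolution of $j^*(B)$ gives an $\mathcal{X}$-resolution of $j_!j^*(B)$ of length $\leq k$ (using $j_!(\mathcal{X''}) \subseteq \mathcal{X}$). Invoking Lemma \ref{prop-resdim}(1) on the displayed $\mathbb{E}$-triangle yields $\mathcal{X}\text{-}\res B \leq \max\{\mathcal{X}\text{-}\res j_!j^*(B),\,\mathcal{X}\text{-}\res i_*i^*(B)\} \leq k$, as desired.

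The main subtle point is verifying that the exact functors $i_*$ and $j_!$ actually convert $\mathbb{E}$-triangle sequences (not just single $\mathbb{E}$-triangles) into $\mathbb{E}$-triangle sequences of the same length; this is where one needs the exactness of $j_!$, which in turn requires the hypothesis that $i^*$ is exact. Once this is observed, the rest of the argument assembles cleanly from Theorem \ref{main-resolving}, Lemma \ref{prop-resdim}, and the adjunction identities of Lemma \ref{lem-rec}.
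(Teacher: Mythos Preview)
Your proposal is correct and follows essentially the same approach as the paper's proof: both directions use the $\mathbb{E}$-triangle $j_!j^*(B)\to B\to i_*i^*(B)$ from Lemma~\ref{lem-rec}(7$'$) together with the inclusions $i_*(\mathcal{X'}),\,j_!(\mathcal{X''})\subseteq\mathcal{X}$ and the exactness of $i_*$, $j_!$ for the $\leq$ direction, and the fully faithful embeddings $i_*$, $j_!$ combined with exactness of $i^*$, $j^*$ for the $\geq$ direction. The only cosmetic difference is that the paper packages the $\mathcal{C}$-side of the $\geq$ inequality as a citation of Theorem~\ref{main-dim}(8), whereas you spell out the same argument directly (and cite part~(4) instead).
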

\begin{proof}
The inequation $\leq$: Assume that  $\mathcal{X'}\text{-}\res \mathcal{A}=m$ and $\mathcal{X''}\text{-}\res \mathcal{C}=n$.
Let $B$ be an object in $\mathcal{B}$.
Since $i^{*}$ is exact,
by Lemma \ref{lem-rec}, there exists an $\mathbb{E}$-triangle
$$\xymatrix@C=15pt{j_{!}j^{*}(B)\ar[r]&B\ar[r]&i_{*}i^{*}(B)
\ar@{-->}[r]&}$$
in $\mathcal{B}$.
Since $i^{*}(B)\in \mathcal{A}$ and $j^{*}(B)\in \mathcal{C}$, we have $\mathcal{X'}\text{-}\res i^{*}(B)\leq m$ and $\mathcal{X''}\text{-}\res j^{*}(B)\leq n$.
Notice that $i_{*}$ is exact and $i_{*}(\mathcal{X'})\subseteq \mathcal{X}$ (by Theorem \ref{main-resolving}), so $\mathcal{X}\text{-}\res i_{*}i^{*}(B)\leq m$.
Since $i^{*}$ is exact, $j_{!}$ is exact by Lemma \ref{lem-rec}.
Notice that $j_{!}(\mathcal{X''})\subseteq \mathcal{X}$ by Theorem \ref{main-resolving}, so
$\mathcal{X}\text{-}\res j_{!}j^{*}(B)\leq n$.
It follows that $\mathcal{X}\text{-}\res \mathcal{B}\leq \max\{m,n\}$ from Lemma \ref{prop-resdim}.

The inequation $\geq$: Assume that $\mathcal{X}\text{-}\res \mathcal{B}=l$, let $A\in \mathcal{A}$. Since $i_{*}(A)\in \mathcal{B}$, we have $\mathcal{X}\text{-}\res i_{*}(A)\leq l$.
Notice that $i^{*}$ is exact by assumption and $i^{*}(\mathcal{X})\subseteq \mathcal{X'}$ by Theorem \ref{main-resolving}, so $\mathcal{X'}\text{-}\res i^{*}i_{*}(A)\leq l$, thus $\mathcal{X'}\text{-}\res A\leq l$ and $\mathcal{X'}\text{-}\res \mathcal{A}\leq l$.
By Theorem \ref{main-dim}(8),
we have $\mathcal{X}\text{-}\res \mathcal{C}\leq l$.
Then
$ \max\{\mathcal{X'}\text{-}\res \mathcal{A},\mathcal{X''}\text{-}\res \mathcal{C}\}\leq \mathcal{X}\text{-}\res\mathcal{B}.$

Thus
$$\mathcal{X}\text{-}\res\mathcal{B}= \max\{\mathcal{X'}\text{-}\res \mathcal{A},\mathcal{X''}\text{-}\res \mathcal{C}\}.$$
\end{proof}

%\begin{remark}
%For special resolving subcategories in $\mathcal{A}$, $\mathcal{B}$ and $\mathcal{C}$, the resolution dimension is identified.
%\end{remark}

In general, $\mathcal{P(B)}\neq\{B\in \mathcal{B}\mid i^{*}(B)\in \mathcal{P(A)}\ \text{and} \ j^{*}(B)\in \mathcal{P(C)}\}$ (see Example \ref{exam}(3)), but we have the following result.

\begin{lemma}\label{lem-pp}
Let $(\mathcal{A},\mathcal{B},\mathcal{C})$ be a recollement of extriangulated categories as the diagram (\ref{recolle}). Then we have the following statements.
\begin{itemize}
\item[(1)] If $j^{*}$ preserves projectives, then
$$\mathcal{P(B)}\subseteq\{B\in \mathcal{B}\mid i^{*}(B)\in \mathcal{P(A)}\ \text{and} \ j^{*}(B)\in \mathcal{P(C)}\}.$$
\item[(2)]
If $i^{*}$ and $i^{!}$ are exact,
then
$$\mathcal{P(B)}=\{B\in \mathcal{B}\mid i^{*}(B)\in \mathcal{P(A)}\ \text{and} \ j^{*}(B)\in \mathcal{P(C)}\}.$$
\end{itemize}
\end{lemma}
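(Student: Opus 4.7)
For (1), the plan is routine: by Lemma \ref{lem-rec}(3), the functor $i^{*}$ always preserves projective objects, and by hypothesis the same is true of $j^{*}$. Therefore, if $P\in\mathcal{P(B)}$, then $i^{*}(P)\in\mathcal{P(A)}$ and $j^{*}(P)\in\mathcal{P(C)}$, giving the asserted inclusion.

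For (2), the nontrivial direction is to show that any $B\in\mathcal{B}$ with $i^{*}(B)\in\mathcal{P(A)}$ and $j^{*}(B)\in\mathcal{P(C)}$ already lies in $\mathcal{P(B)}$. The strategy is to sandwich $B$ between two projective objects of $\mathcal{B}$ via the right-exact $\mathbb{E}$-triangle sequence of the recollement and then invoke closure of $\mathcal{P(B)}$ under extensions. Concretely, first I would collect the relevant consequences of the exactness hypotheses: since $i^{!}$ is exact, Lemma \ref{lem-rec}(4) gives that $i_{\ast}$ preserves projectives, and Lemma \ref{lem-rec}(6) gives that $j_{\ast}$ is exact, whence Lemma \ref{lem-rec}(4) again yields that $j^{\ast}$ preserves projectives (so hypothesis of (1) is automatic here). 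Independently, $j_{!}$ always preserves projectives by Lemma \ref{lem-rec}$(3')$.

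Next, since $i^{\ast}$ is exact, Lemma \ref{lem-rec}$(7')$ furnishes an $\mathbb{E}$-triangle
$$\xymatrix@C=15pt{j_{!}j^{\ast}(B)\ar[r]^-{\upsilon_B}&B\ar[r]^-{\nu_B}&i_{\ast}i^{\ast}(B)\ar@{-->}[r]&}$$
in $\mathcal{B}$. By assumption $i^{\ast}(B)\in\mathcal{P(A)}$, and since $i_{\ast}$ preserves projectives, $i_{\ast}i^{\ast}(B)\in\mathcal{P(B)}$; similarly $j^{\ast}(B)\in\mathcal{P(C)}$ and $j_{!}$ preserves projectives force $j_{!}j^{\ast}(B)\in\mathcal{P(B)}$. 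Finally, recall from the Remark following the definition of projective objects that $\mathcal{P(B)}$ is closed under extensions, so $B\in\mathcal{P(B)}$, completing the reverse inclusion.

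There is no real obstacle here—each step is an immediate application of Lemma \ref{lem-rec}. The only thing to be careful about is matching hypotheses to the correct part of Lemma \ref{lem-rec}: we need $i^{\ast}$ exact precisely to get the $\mathbb{E}$-triangle from $(7')$ (not just a right exact sequence), and we need $i^{!}$ exact to ensure that $i_{\ast}$ preserves projective objects, which is the step where the assumption on $i^{!}$ is used in an essential way.
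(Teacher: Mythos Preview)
Your proof is correct and follows essentially the same route as the paper's: part (1) is identical, and for (2) you use the same $\mathbb{E}$-triangle from Lemma \ref{lem-rec}$(7')$ with both ends shown projective via Lemma \ref{lem-rec}$(3')$ and (4). The only cosmetic difference is that the paper observes the triangle splits (since $i_{\ast}i^{\ast}(B)$ is projective) and writes $B\cong j_{!}j^{\ast}(B)\oplus i_{\ast}i^{\ast}(B)$, whereas you invoke closure of $\mathcal{P(B)}$ under extensions; these are equivalent here.
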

\begin{proof}
(1) By Lemma \ref{lem-rec}, we have $i^{*}(\mathcal{P(B)})\subseteq \mathcal{P(A)}$.
Since $j^{*}$ preserves projectives,
$j^{*}(\mathcal{P(B)})\subseteq \mathcal{P(C)}$.
Then
$$\mathcal{P(B)}\subseteq\{B\in \mathcal{B}\mid i^{*}(B)\in \mathcal{P(A)}\ \text{and} \ j^{*}(B)\in \mathcal{P(C)}\}.$$

(2)
Since $i^{!}$ is exact by assumption, $j^{*}$ preserves projectives
by Lemma \ref{lem-rec}.
Then, by (1), we have
$$\mathcal{P(B)}\subseteq\{B\in \mathcal{B}\mid i^{*}(B)\in \mathcal{P(A)}\ \text{and} \ j^{*}(B)\in \mathcal{P(C)}\}.$$
Conversely, let $B\in \mathcal{B}$ such that  $i^{*}(B)\in \mathcal{P(A)}$ {and}  $ j^{*}(B)\in \mathcal{P(C)}$.
Since $i^{*}$ is exact, there exists an $\mathbb{E}$-triangle
\begin{align}\label{E11}
\xymatrix@C=15pt{j_{!}j^{*}(B)\ar[r]&B\ar[r]&i_{*}i^{*}(B)
\ar@{-->}[r]&}
\end{align}
in $\mathcal{B}$.

Since $i^{!}$ is exact, $i_{*}$ preserves projectives by Lemma \ref{lem-rec}.
Notice that $j_{!}$ preserves projectives by Lemma \ref{lem-rec},
so $j_{!}j^{*}(B)$ and $i_{*}i^{*}(B)$ are projective objects in $\mathcal{B}$.
The $\mathbb{E}$-triangle (\ref{E11}) splits, thus $B\cong  j_{!}j^{*}(B)\oplus i_{*}i^{*}(B)\in \mathcal{P(B)}$.
Then $\{B\in \mathcal{B}\mid i^{*}(B)\in \mathcal{P(A)}\ \text{and} \ j^{*}(B)\in \mathcal{P(C)}\}\subseteq\mathcal{P(B)}$, and the desired assertion is obtained.
\end{proof}

\begin{remark}
Let $(\mathcal{A},\mathcal{B},\mathcal{C})$ be a recollement of extriangulated categories as the diagram (\ref{recolle}). Assume that $i^{*}$ is exact and $j^{*}$ preserves projectives. One can easily get
$$\mathcal{P(B)}\subseteq\{B\in \mathcal{B}\mid i^{*}(B)\in \mathcal{P(A)}\ \text{and} \ j^{*}(B)\in \mathcal{P(C)}\}$$
by Theorem \ref{main-resolving}.
\end{remark}

Combining Theorem \ref{dim-=} with Lemma \ref{lem-pp}, we have the following result.
\begin{corollary}
Let $(\mathcal{A},\mathcal{B},\mathcal{C})$ be a recollement of extriangulated categories as the diagram (\ref{recolle}).
If $i^{*}$ and $i^{!}$ are exact,
then $$\gl \mathcal{B}=\max\{\gl \mathcal{A},\gl\mathcal{C}\}.$$
\end{corollary}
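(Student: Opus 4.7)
The plan is to instantiate Theorem \ref{dim-=} with the trivial resolving subcategories $\mathcal{X}' = \mathcal{P(A)}$ and $\mathcal{X}'' = \mathcal{P(C)}$, and then use Lemma \ref{lem-pp}(2) to identify the resulting glued subcategory $\mathcal{X}$ with $\mathcal{P(B)}$. Because $\mathcal{P}(-)$-resolution dimension of a category is by definition its global dimension, the conclusion will drop out immediately.

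First I would check that the two hypotheses of Theorem \ref{dim-=} are in force. Exactness of $i^*$ is part of the assumption. For the second, that $j^*$ preserves projective objects, I would chain two items of Lemma \ref{lem-rec}: by item (6), exactness of $i^!$ yields exactness of $j_*$; by item (4), exactness of $j_*$ in turn implies that $j^*$ preserves projectives. Therefore Theorem \ref{dim-=} applies with the chosen $\mathcal{X}'$ and $\mathcal{X}''$ and delivers
$$\mathcal{X}\text{-}\res\mathcal{B} \;=\; \max\{\gl\mathcal{A},\, \gl\mathcal{C}\},$$
where $\mathcal{X} = \{B\in\mathcal{B} \mid i^*(B)\in\mathcal{P(A)}\text{ and }j^*(B)\in\mathcal{P(C)}\}$.

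Finally, Lemma \ref{lem-pp}(2), whose hypotheses are precisely that $i^*$ and $i^!$ are exact, identifies this $\mathcal{X}$ with $\mathcal{P(B)}$. Substituting into the previous display yields $\gl\mathcal{B} = \mathcal{P(B)}\text{-}\res\mathcal{B} = \max\{\gl\mathcal{A},\, \gl\mathcal{C}\}$, which is the claim. The whole argument is a bookkeeping specialization of the two prior results, so there is no substantive obstacle; the only point to verify is that the two hypotheses $i^*$ exact and $i^!$ exact suffice to trigger both Theorem \ref{dim-=} and Lemma \ref{lem-pp}(2), and this is exactly what the chain of implications through Lemma \ref{lem-rec}(6) and (4) secures.
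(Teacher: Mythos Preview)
Your proposal is correct and follows exactly the approach the paper intends: the corollary is stated immediately after Lemma \ref{lem-pp} with the preamble ``Combining Theorem \ref{dim-=} with Lemma \ref{lem-pp}, we have the following result,'' and your argument spells out precisely this combination, including the needed chain through Lemma \ref{lem-rec}(6) and (4) to verify that $j^*$ preserves projectives.
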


\begin{remark}
Let $(\mathcal{A},\mathcal{B},\mathcal{C})$ be a recollement of abelian categories. If $i^{*}$ and $i^{!}$ are exact, then $\mathcal{B}\cong \mathcal{A} \times\mathcal{C}$ (see \cite[Corollary 8.10]{Fr}).
Thus the assertion $\gl \mathcal{B}=\max\{\gl \mathcal{A},\gl \mathcal{C}\}$ is obvious.
\end{remark}

Conversely, we have the following result, which holds true for any subcategory $\mathcal{X}$ of $\mathcal{B}$ ($\mathcal{X}$ is not necessary a resolving subcategory).

\begin{theorem}\label{main-reso-dim-con}
Let $(\mathcal{A},\mathcal{B},\mathcal{C})$ be a recollement of extriangulated categories as the diagram (\ref{recolle}).
Assume that $\mathcal{X}$ is a subcategory of $\mathcal{B}$. Then we have the following statements.
\begin{itemize}
\item[(1)] If $i^{*}$ is exact, then $i^{*}(\mathcal{X})\text{-}\res\mathcal{A}\leq \mathcal{X}\text{-}\res\mathcal{B}$.
\item[(2)] $j^{*}(\mathcal{X})\text{-}\res\mathcal{C}\leq \mathcal{X}\text{-}\res\mathcal{B}$.
\end{itemize}
% In particular,
\end{theorem}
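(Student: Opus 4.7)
The plan is to reduce both parts to a direct application of exactness combined with the adjunction isomorphisms available in a recollement. In each case I lift an arbitrary object of $\mathcal{A}$ (resp.\ $\mathcal{C}$) to $\mathcal{B}$ via the fully faithful embedding $i_*$ (resp.\ $j_!$), take a short $\mathcal{X}$-resolution of that lift bounded by $\mathcal{X}\text{-}\res\mathcal{B}$, and push it down through the exact functor $i^*$ (resp.\ $j^*$) to obtain an $i^*(\mathcal{X})$- (resp.\ $j^*(\mathcal{X})$-) resolution on the quotient side. The key technical observation is that an exact functor between extriangulated categories transports an $\mathbb{E}$-triangle sequence to an $\mathbb{E}$-triangle sequence, because such a sequence is built out of $\mathbb{E}$-triangles together with factorizations of the differentials, and both ingredients are preserved under an exact functor.

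For part (1), fix $A\in\mathcal{A}$ and assume $\mathcal{X}\text{-}\res\mathcal{B}$ is finite (otherwise the inequality is trivial). Since $i_*(A)\in\mathcal{B}$, there exist $n\le\mathcal{X}\text{-}\res\mathcal{B}$ and an $\mathbb{E}$-triangle sequence
$$\xymatrix@C=15pt{0\ar[r]&X_n\ar[r]&\cdots\ar[r]&X_0\ar[r]&i_*(A)\ar[r]&0}$$
with each $X_j\in\mathcal{X}$. Applying the (assumed) exact functor $i^*$ and invoking the natural isomorphism $i^*i_*\cong\Id_{\mathcal{A}}$ from Lemma \ref{lem-rec}(1) produces an $\mathbb{E}$-triangle sequence
$$\xymatrix@C=15pt{0\ar[r]&i^*(X_n)\ar[r]&\cdots\ar[r]&i^*(X_0)\ar[r]&A\ar[r]&0}$$
in $\mathcal{A}$ with $i^*(X_j)\in i^*(\mathcal{X})$ for every $j$; hence $i^*(\mathcal{X})\text{-}\res A\le n$, and taking the supremum over $A$ yields (1). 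For part (2), the argument is entirely parallel, replacing $i_*$ by $j_!$ and $i^*$ by $j^*$, and using $j^*j_!\cong\Id_{\mathcal{C}}$ from Lemma \ref{lem-rec}(1); here no extra hypothesis is needed because $j^*$ is exact by the very definition of a recollement of extriangulated categories.

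There is no genuine obstacle in this argument, and it is worth emphasising, as this is why the statement is formulated for an arbitrary subcategory $\mathcal{X}$, that neither closure under extensions nor closure under cocones of $\mathcal{X}$ is used: exactness of $i^*$ and $j^*$ alone is what transports the resolution intact, and the memberships $i^*(X_j)\in i^*(\mathcal{X})$ and $j^*(X_j)\in j^*(\mathcal{X})$ are tautological.
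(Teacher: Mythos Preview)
Your argument is correct and follows essentially the same route as the paper: lift via a fully faithful functor, resolve in $\mathcal{B}$, and push down through the exact functor using the unit/counit isomorphism. The only cosmetic difference is that in part~(2) you embed $C$ via $j_!$ and use $j^*j_!\cong\Id_{\mathcal{C}}$, whereas the paper embeds via $j_*$ and uses $j^*j_*\cong\Id_{\mathcal{C}}$; either choice works equally well.
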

\begin{proof}
Assume that $\mathcal{X}\text{-}\res \mathcal{B}=l$.

(1) Let $A\in \mathcal{A}$. Since $i_{*}(A)\in \mathcal{B}$, we have $\mathcal{X}\text{-}\res i_{*}(A)\leq l$.
Notice that $i^{*}$ is exact by assumption and $i^{*}i_{*}\cong \Id_{\mathcal{A}}$ by Lemma \ref{lem-rec}, so we have $i^{*}(\mathcal{X})\text{-}\res i^{*}i_{*}(A)\leq l$, thus $i^{*}(\mathcal{X})\text{-}\res A\leq l$ and $i^{*}(\mathcal{X})\text{-}\res \mathcal{A}\leq l$, that is, $i^{*}(\mathcal{X})\text{-}\res\mathcal{A}\leq \mathcal{X}\text{-}\res\mathcal{B}$.

(2)
Let $C\in \mathcal{C}$. Since $j_{*}(C)\in \mathcal{B}$, we have $\mathcal{X}\text{-}\res j_{*}(C)\leq l$.
Notice that $j^{*}$ is exact and $j^{*}j_{*}\cong \Id_{\mathcal{C}}$ by Lemma \ref{lem-rec}, so we have $j^{*}(\mathcal{X})\text{-}\res j^{*}j_{*}(C)\leq l$, thus $j^{*}(\mathcal{X})\text{-}\res C\leq l$ and $j^{*}(\mathcal{X})\text{-}\res \mathcal{C}\leq l$, that is, $j^{*}(\mathcal{X})\text{-}\res\mathcal{C}\leq \mathcal{X}\text{-}\res\mathcal{B}$.
%Thus
%$$\mathcal{X}\text{-}\res\mathcal{B}\geq \max\{\mathcal{X'}\text{-}\res \mathcal{A},\mathcal{X''}\text{-}\res \mathcal{C}\},$$
\end{proof}

Taking $\mathcal{X}=\mathcal{P(B)}$ in Theorem \ref{main-reso-dim-con}, then we have
\begin{corollary}
Let $(\mathcal{A},\mathcal{B},\mathcal{C})$ be a recollement of extriangulated categories  as the diagram (\ref{recolle}).
Then we have the following statements.
\begin{itemize}
\item[(1)] If $i^{*}$ is exact, $\gl \mathcal{A}\leq \gl \mathcal{B}$.
\item[(2)] {\rm (cf. Corollary \ref{main-gl}(2)(b))} If $i^{!}$ or $j_{*}$ is exact, then $\gl \mathcal{C}\leq \gl \mathcal{B}$.
\end{itemize}
\end{corollary}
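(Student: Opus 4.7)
The plan is to instantiate Theorem \ref{main-reso-dim-con} with the trivial resolving subcategory $\mathcal{X} = \mathcal{P(B)}$, for which $\mathcal{P(B)}\text{-}\res \mathcal{B}$ is exactly $\gl \mathcal{B}$ by the remark following the definition of resolution dimension. Each part of the corollary will then reduce to chaining the corresponding inequality from Theorem \ref{main-reso-dim-con} with the elementary observation that a larger class of ``resolving'' objects can only weaken the resulting resolution dimension, i.e.\ $\mathcal{X}\subseteq\mathcal{Y}$ implies $\mathcal{Y}\text{-}\res M \leq \mathcal{X}\text{-}\res M$ for every $M$, simply because every $\mathbb{E}$-triangle sequence with terms in $\mathcal{X}$ has terms in $\mathcal{Y}$.

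For part (1), the exactness of $i^{*}$ feeds Theorem \ref{main-reso-dim-con}(1) directly and produces $i^{*}(\mathcal{P(B)})\text{-}\res \mathcal{A} \leq \gl \mathcal{B}$. Since $i^{*}$ preserves projectives by Lemma \ref{lem-rec}(3), we have the inclusion $i^{*}(\mathcal{P(B)}) \subseteq \mathcal{P(A)}$, and hence $\gl \mathcal{A} = \mathcal{P(A)}\text{-}\res \mathcal{A} \leq i^{*}(\mathcal{P(B)})\text{-}\res \mathcal{A}$ by the elementary observation above. Concatenating the two inequalities yields $\gl \mathcal{A} \leq \gl \mathcal{B}$.

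For part (2) the strategy is identical with $j^{*}$ in place of $i^{*}$, provided one first arranges for $j^{*}$ to preserve projectives so that $j^{*}(\mathcal{P(B)}) \subseteq \mathcal{P(C)}$. When $j_{*}$ is exact this preservation is immediate from Lemma \ref{lem-rec}(4). When only $i^{!}$ is assumed exact, Lemma \ref{lem-rec}(6) first promotes the hypothesis to exactness of $j_{*}$, after which the same conclusion applies. Theorem \ref{main-reso-dim-con}(2) then gives $j^{*}(\mathcal{P(B)})\text{-}\res \mathcal{C} \leq \gl \mathcal{B}$, while the inclusion $j^{*}(\mathcal{P(B)}) \subseteq \mathcal{P(C)}$ furnishes $\gl \mathcal{C} \leq j^{*}(\mathcal{P(B)})\text{-}\res \mathcal{C}$, and the two combine to give the desired bound.

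There is essentially no technical obstacle here, since the corollary is a transparent specialization of Theorem \ref{main-reso-dim-con}; the only pitfall worth flagging is remembering to cite Lemma \ref{lem-rec}(6) in the $i^{!}$-exact case of part (2), because without that intermediate step the exactness of $i^{!}$ does not immediately yield the preservation of projectives by $j^{*}$ that the argument requires.
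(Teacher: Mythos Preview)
Your proposal is correct and follows essentially the same approach as the paper: take $\mathcal{X}=\mathcal{P(B)}$ in Theorem~\ref{main-reso-dim-con}, use Lemma~\ref{lem-rec} to obtain the inclusions $i^{*}(\mathcal{P(B)})\subseteq\mathcal{P(A)}$ and $j^{*}(\mathcal{P(B)})\subseteq\mathcal{P(C)}$, and conclude. The paper's proof is terser (it simply cites Lemma~\ref{lem-rec} without distinguishing the route through (6) then (4) in the $i^{!}$-exact case), but the underlying argument is identical to yours.
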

\begin{proof}
(1)
Notice that $i^{*}(\mathcal{P(B)})\subseteq \mathcal{P(A)}$ by Lemma \ref{lem-rec}, so the desired result follows from Theorem \ref{main-reso-dim-con}.

(2)
Notice that if $i^{!}$ or $j_{*}$ is exact, then $ j^{*}(\mathcal{P(B)})\subseteq \mathcal{P(C)}$ by Lemma \ref{lem-rec}, so the desired result follows from Theorem \ref{main-reso-dim-con}.
\end{proof}

\section{Examples}

We give some  examples to illustrate the obtained results.

Let $A, B$ be artin algebras and $_{A}M_{B}$ an $(A,B)$-bimodule, and let $\Lambda={A\ {M}\choose\  0\  \ B}$ be a triangular matrix algebra.
Then any module in $\mod \Lambda$ can be uniquely written as a triple ${X\choose Y}_{f}$ with $X\in\mod A$, $Y\in\mod B$
and $f\in\Hom_{A}(M\otimes_{B}Y,X)$ (see \cite[p.76]{AMRISSO95R} for more details).

\begin{example}\label{exam}
{\rm
 Let $A$ be a finite dimensional algebra given by the quiver $\xymatrix@C=15pt{1\ar[r]&2}$ and $B$ be a finite dimensional algebra given by the quiver $\xymatrix@C=15pt{3\ar[r]^{\alpha}&4\ar[r]^{\beta}&5}$ with the relation $\beta\alpha=0$. Define a triangular matrix algebra $\Lambda={A\ \ A\choose 0\ \ B}$, where the right $B$-module structure on $A$ is induced by the unique algebra surjective homomorphsim $\xymatrix@C=15pt{B\ar[r]^{\phi}&A}$ satisfying $\phi(e_{3})=e_{1}$, $\phi(e_{4})=e_{2}$, $\phi(e_{5})=0$.  Then $\Lambda$ is
a finite dimensional algebra given by the quiver
$$\xymatrix@C=15pt{&\cdot\\
\cdot\ar[ru]^{\delta}&&\ar[lu]_{\gamma}\cdot\ar[rr]^-{\beta}&&\cdot\\
&\ar[lu]^{\epsilon}\cdot\ar[ru]_{\alpha}}$$
with the relations $\gamma\alpha=\delta\epsilon$ and $\beta\alpha=0$. The Auslander-Reiten quiver of $\Lambda$ is
$$\xymatrix@C=15pt{{0\choose P(5)}\ar[rd]&&{S(2)\choose S(4)}\ar[rd]&&{S(1)\choose 0}\ar[rd]&&0\choose P(3)\ar[rd]\\
&{S(2)\choose P(4)}\ar[ru]\ar[rd]&&P(1)\choose S(4)\ar[ru]\ar[r]\ar[rd]&P(1)\choose P(3)\ar[r]&S(1)\choose P(3)\ar[ru]\ar[rd]&&{0\choose S(3)}.\\
S(2)\choose 0\ar[ru]\ar[rd]&&P(1)\choose P(4)\ar[ru]\ar[rd]&&0\choose S(4)\ar[ru]&&S(1)\choose S(3)\ar[ru]\\
&P(1)\choose 0\ar[ru]&&0\choose P(4)\ar[ru]}$$

Following \cite[Example 2.12]{PC14H}, we have that
$$\xymatrix{\mod B\ar[rr]!R|-{i_{*}}&&\ar@<-2ex>[ll]!R|-{i^{*}}
\ar@<2ex>[ll]!R|-{i^{!}}\mod \Lambda
\ar[rr]!L|-{j^{*}}&&\ar@<-2ex>[ll]!R|-{j_{!}}\ar@<2ex>[ll]!R|-{j_{*}}
\mod A}$$
is a recollement of module categories, where
\begin{align*}
& i^{*}({X\choose Y}_{f})=Y, &&i_{*}(Y)={0\choose Y}&i^{!}({X\choose Y}_{f})=\Ker(Y\ra \Hom_{A}(A,X)),\\
& j_{!}(X)={X\choose 0},&&j^{*}({X\choose Y}_{f})=X, &j_{*}(X)={X\choose \Hom_{A}(A,X)}.
\end{align*}

By \cite[Lemma 3.2]{L17G}, we know that $i^{*}$ admits a left adjoint $\widetilde{i^{*}}$
 and
$j_{!}$ admits a left adjoint $\widetilde{j_{!}}$,
where
\begin{align*}
\widetilde{i^{*}}(Y)={A\otimes_{B} Y\choose Y}_{1} ,&&  \widetilde{j_{!}}({X\choose Y}_{f})=\Coker f,
\end{align*}
So $i^{*}$ and $j_{!}$ are exact.
Since $_{A}A$ is projective, $j_{*}$ is exact.
\begin{itemize}

\item[(1)] Take resolving subcategories
\begin{align*}
\mathcal{X'}&=\mathcal{P}(\mod B)=\add(P(5)\oplus P(4)\oplus P(3)),\\
\mathcal{X}&=\add\left({0\choose P(5)} \oplus {S(2)\choose 0}
\oplus {S(2)\choose P(4)}\oplus{P(1)\choose 0} \oplus {S(2)\choose S(4)} \oplus {P(1)\choose P(4)} \oplus {0\choose P(4)}\oplus {P(1)\choose S(4)}\oplus {P(1)\choose P(3)}\right),\\
\mathcal{X''}&=\mathcal{P}(\mod A)=\add (S(2)\oplus  P(1))
\end{align*}
of $\mod B$, $\mod \Lambda$ and $\mod A$ respectively.
Clearly $\mathcal{X''}\text{-}\res \mod A=1$ and $\mathcal{X}\text{-}\res i_{*}(\mod B)=1$.
Then by Theorem \ref{main-dim} (or \cite[Theorem 3.7]{ZHJ1}),
we have the following assertions.
\begin{itemize}
\item[(1.1)] Notice that $j_{!}(\mathcal{X''})= \add ({S(2)\choose 0}
\oplus {P(1)\choose 0})\subseteq \mathcal{X}$. Thus $\mathcal{X}\text{-}\res \mod \Lambda\leq \mathcal{X''}\text{-}\res \mod A+\mathcal{X}\text{-}\res i_{*}(\mod B)+1=1+1+1=3$.
\item[(1.2)]  Notice that $j^{*}(\mathcal{X})= \add ({S(2)\choose 0}
\oplus {P(1)\choose 0})\subseteq \mathcal{X''}$. Thus
$1=\mathcal{X''}\text{-}\res \mod A\leq \mathcal{X}\text{-}\res \mod \Lambda$.
\end{itemize}
Then $1\leq \mathcal{X}\text{-}\res \mod \Lambda\leq 3$.
In fact, $\mathcal{X}\text{-}\res \mod \Lambda=2$.
%增加对第四节前几个结论的解释

\item[(2)]
Take resolving subcategories
\begin{align*}
\mathcal{X'}&=\add(P(5)\oplus P(4)\oplus S(4)\oplus P(3)),\\
\mathcal{X}&=\add \left({0\choose P(5)}\oplus{S(2)\choose P(4)}\oplus{S(2)\choose 0}
\oplus {P(1)\choose 0}\oplus {S(2)\choose S(4)}\oplus{P(1)\choose P(4)}\oplus\right. \\
&~~~~~~~\left.{P(1)\choose S(4)}\oplus {0\choose P(4)}\oplus {S(1)\choose 0}\oplus{P(1)\choose P(3)}\oplus {0\choose S(4)}\oplus {S(1)\choose P(3)}\oplus {0\choose P(3)}\right),\\
\mathcal{X''}&=\add (S(2)\oplus  P(1))
\end{align*}
of $\mod B$, $\mod \Lambda$ and $\mod A$ respectively.
Clearly, $\mathcal{X'}\text{-}\res \mod B=2$ and $\mathcal{X''}\text{-}\res \mod A=1$.
Then by Theorem \ref{main-dim} (or \cite[Theorem 3.7]{ZHJ1}), we have the following assertions.
\begin{itemize}
\item[(2.1)] Notice that $i_{*}(\mathcal{X'})= \add ({0\choose P(5)}
\oplus {0\choose P(4)}\oplus {0\choose S(4)}\oplus {0\choose P(3)})\subseteq \mathcal{X}$. So $\mathcal{X}\text{-}\res i_{*}(\mod B) \leq \mathcal{X'}\text{-}\res \mod B=2 $.
It is clear that $\mathcal{X}\text{-}\res i_{*}(\mod B) =1$.

\item[(2.2)] Notice that $j_{!}(\mathcal{X''})= \add ({S(2)\choose 0}
\oplus {P(1)\choose 0})\subseteq \mathcal{X}$ and.  So $\mathcal{X}\text{-}\res \mod \Lambda \leq \mathcal{X''}\text{-}\res \mod A +\mathcal{X}\text{-}\res i_{*}(\mod A)+1=1+1+1=3$.
\end{itemize}
In fact,
$\mathcal{X}\text{-}\res \mod \Lambda=1$.

\item[(3)] Take resolving subcategories
$$\mathcal{X'}=\mathcal{P}(\mod B)=\add(P(5)\oplus P(4)\oplus P(3)),$$
$$\mathcal{X''}=\mathcal{P}(\mod A)=\add (S(2)\oplus  P(1))$$
of $\mod B$ and $\mod A$ respectively. Then by Theorem \ref{main-resolving},
we get a resolving subcategory
\begin{align*}
\mathcal{X}=\add \left({0\choose P(5)}\oplus{S(2)\choose P(4)}\oplus{S(2)\choose 0}
\oplus {P(1)\choose 0}\oplus{P(1)\choose P(4)}\oplus{0\choose P(4)}\oplus{P(1)\choose P(3)}\oplus {0\choose P(3)}\right)
\end{align*}
 in $\mod \Lambda$. It is obvious that $\mathcal{P}(\mod \Lambda) \subsetneqq   \mathcal{X}$.

Clearly, $\mathcal{X'}\text{-}\res \mod B=2$ and
$\mathcal{X''}\text{-}\res \mod A=1$.
So by Theorem \ref{dim-=}, $\mathcal{X}\text{-}\res \mod \Lambda=2$.

\item[(4)]
Take resolving subcategories
$$\mathcal{X'}=\add(P(5)\oplus P(4)\oplus S(4)\oplus P(3)),$$
$$\mathcal{X''}=\add (S(2)\oplus  P(1))$$
of $\mod B$ and $\mod A$ respectively. Then by Theorem \ref{main-resolving},
we get a resolving subcategory
\begin{align*}
\mathcal{X}=\add \left({0\choose P(5)}\oplus{S(2)\choose P(4)}\oplus{S(2)\choose 0}
\oplus {P(1)\choose 0}\oplus {S(2)\choose S(4)}\oplus{P(1)\choose P(4)}\oplus \right. \\
~~~~~~~~~~~~~~~~~~~~~~\left.{P(1)\choose S(4)}\oplus {0\choose P(4)}\oplus{P(1)\choose P(3)}\oplus {0\choose S(4)}\oplus {0\choose P(3)}\right)
\end{align*}
 in $\mod \Lambda$.
Clearly, $\mathcal{X'}\text{-}\res \mod B=1$ and
$\mathcal{X''}\text{-}\res \mod A=1$.
So by Theorem \ref{dim-=}, $\mathcal{X}\text{-}\res \mod \Lambda=1$.

\item[(5)]
Take a resolving subcategory
\begin{align*}
\mathcal{X}=\add \left({0\choose P(5)}\oplus{S(2)\choose P(4)}\oplus{S(2)\choose 0}
\oplus {P(1)\choose 0}\oplus {S(2)\choose S(4)}\oplus{P(1)\choose P(4)}\oplus \right.\\
~~~~~~~\left.{P(1)\choose S(4)}\oplus {0\choose P(4)}\oplus {S(1)\choose 0}\oplus{P(1)\choose P(3)}\oplus {0\choose S(4)}\oplus {S(1)\choose P(3)}\oplus {0\choose P(3)}\right)
\end{align*}
in $\mod \Lambda$.
Notice that
$i_{*}i^{*}(\mathcal{X})=\add ({0\choose P(5)}\oplus{0\choose P(4)}\oplus{0\choose S(4)}\oplus {0\choose P(3)})\subseteq \mathcal{X}$ and
$j_{!}j^{*}(\mathcal{X})=\add ({S(1)\choose 0}\oplus {P(1)\choose 0}\oplus {S(2)\choose 0})\subseteq\mathcal{X}$.
Then by Theorem \ref{main-resolving-conve}, we have that
$$i^{*}(\mathcal{X})=\add (P(5) \oplus P(4)\oplus S(4)\oplus P(3))$$
is a resolving subcategory in $\mod B$, and
$$j^{*}(\mathcal{X})=\add (S(2) \oplus P(1)\oplus S(1))=\mod A$$ is a resolving subcategory in $\mod A$.

Clearly, $\mathcal{X}\text{-}\res \mod \Lambda=1$.
By Theorem \ref{main-reso-dim-con}, we have
$i^{*}(\mathcal{X})\text{-}\res \mod B\leq 1$ and
$j^{*}(\mathcal{X})\text{-}\res \mod A\leq 1$.
In fact,
$i^{*}(\mathcal{X})\text{-}\res \mod B=1$ and
$j^{*}(\mathcal{X})\text{-}\res \mod A=0$.

\item[(6)] The condition ``$i_{*}i^{*}(\mathcal{X})\subseteq\mathcal{X}$''  is not necessary in Theorem \ref{main-resolving-conve}(1).
Take a resolving subcategory
$$\mathcal{X}=\add\left({0\choose P(5)} \oplus {S(2)\choose 0}
\oplus {S(2)\choose P(4)}\oplus{P(1)\choose 0} \oplus {S(2)\choose S(4)} \oplus {P(1)\choose P(4)} \oplus {0\choose P(4)}\oplus {P(1)\choose P(3)} \right)$$ in $\mod \Lambda$.
Notice that
$$i^{*}(\mathcal{X})=\add (P(5) \oplus P(4)\oplus S(4)\oplus P(3))$$ is a resolving subcategory in $\mod B$.
But
$$i_{*}i^{*}(\mathcal{X})=\add \left({0\choose P(5)}\oplus{0\choose P(4)}\oplus{0\choose S(4)}\oplus {0\choose P(3)}\right)\nsubseteq\mathcal{X}.$$
\end{itemize}}
\end{example}

\vspace{0.5cm}
\textbf{Acknowledgement}.
%The authors thank the referee for the useful suggestions.
The first author was supported by the NSF of China (12001168), the Key Research Project of
Education Department of Henan Province (21A110006) and Henan
University of Engineering (DKJ2019010). The second author
was supported by the NSF of China (11901341, 11971225).

%This work was supported by NSFC (11971225, 11901341, 12001168),
%Henan University of Engineering (DKJ2019010), the Key Research Project of Education Department of Henan Province (21A110006), the project ZR2019QA015 supported by Shandong Provincial Natural Science Foundation.
%The authors thank to
%the referee whose valuable corrections and suggestions have  improved the presentation and organization of
%this article.


\begin{thebibliography}{99}
\setlength{\itemsep}{-5pt}
\bibitem{AB69S} M. Auslander, M. Bridger, {\it Stable module theory}. Mem. Amer. Math. Soc. 94,
American Mathematical Society, Providence, 1969.

 \bibitem{AR} M. Auslander, I. Reiten, {\it Applications of contravariantly finite subcategories}, Adv. Math. {\bf 86}(1) (1991),  111--152.

\bibitem{AMRISSO95R}
M.~Auslander, I.~Reiten, and S.~O. Smal\o,
 \emph{{Representation theory of {A}rtin algebras}}, {
  Cambridge Studies in Adv. Math.} 36,
 Cambridge University Press, Cambridge, 1995.



\bibitem{BBD}
A.~A. Be{\u\i}linson, J. Bernstein, P. Deligne,
 Faisceaux pervers,
 in: Analysis and topology on singular spaces, {I}, Luminy,
  1981, {Ast\'erisque}, {vol. 100}, Soc. Math. France,
  Paris, 1982, 5--171.

\bibitem{B00R} A. Beligiannis, {\it Relative homological algebra and purity in triangulated categories}, J. Algebra {\bf 227} (2000), 268--361.


\bibitem{Ben} R. Bennett-Tennenhaus, A. Shah, {\it Transport of structure in higher homological algebra}, J. Algebra
{\bf 574} (2021), 514--549.

\bibitem{Bu} T. B\"{u}hler, {\it Exact categories}, Expo. Math. {\bf 28} (2010), 1--69.


\bibitem{Fr} V. Franjou, T. Pirashvili, {\it Comparison of abelian categories recollements}, Doc. Math. {\bf 9} (2004), 41--56.
\bibitem{GNP21} M. Gorsky, H. Nakaoka, Y. Palu, {\it Positive and negative extensions in extriangulated categories}, arXiv:2103.12482.
\bibitem{GMT} W. Gu, X. Ma, L. Tan. {\it Homological dimensions of extriangulated categories and recollements}, 	arXiv:2104.06042.


\bibitem{HHZ21T} J. He, Y. Hu, P. Zhou, {\it Torsion pairs and recollements of extriangulated categories}. Comm. Algebra (2021), (to appear). %https://doi.org/10.1080/00927872.2021.1996585

   %\bibitem{H19E} J. He, {\it Extensions of covariantly finite subcategories revisited}. Czech Math J {\bf 69} (2019), 403--415.


\bibitem{HZZ21G} J. Hu, D. Zhang, P. Zhou. {\it Gorenstein homological dimensions for extriangulated categories}.
Bull. Malays. Math. Sci. Soc. {\bf 44}  (2021) 2235--2252.

\bibitem{HZZ20P} J. Hu, D. Zhang, P. Zhou, {\it Proper classes and Gorensteinness in extriangulated categories},
J. Algebra {\bf 551}(2020), 23--60.

\bibitem{HZZ21P} J. Hu, D. Zhang, P. Zhou. {\it Proper resolutions and Gorensteinness in extriangulated categories}. Front. Math. China {\bf 16} (2021), 95--117.


\bibitem{HYG} Y. Hu, H. Yao, {\it Relative homological dimensions in recollements of triangulated categories}. Front. Math. China {\bf 14} (2019), 25--43.

\bibitem{HZ21R} Y. Hu, P. Zhou, {\it Recollements arising from cotorsion pairs on extriangulated categories}. Front. Math. China {\bf 16} (2021), 937--955.

\bibitem{H14} Z. Huang, {\it Homological dimensions relative to preresolving subcategories}, Kyoto J. Math. {\bf 54} (2014), 727--757.

\bibitem{INY18A} O. Iyama, H. Nakaoka, Y. Palu, {\it Auslander-Reiten theory in extriangulated categories}, arXiv: 1805.03776.

\bibitem{Liu} Y. Liu, H. Nakaoka, {\it Hearts of twin cotorsion pairs on extriangulated categories},
J. Algebra {\bf 528} (2019), 96--149.

\bibitem{LZ} Y. Liu, P. Zhou, {\it Gluing $n$-tilting and $n$-cotilting subcategories}, arXiv:2108.08522.

\bibitem{L17G} M. Lu. {\it Gorenstein defect categories of triangular matrix algebras}, J. Algebra {\bf 480} (2017), 346--367.

\bibitem{MXZ22S} X. Ma, Z. Xie, T. Zhao, {\it Support $\tau$-tilting modules and recollements},
Colloq. Math. {\bf 167} (2022), 303--328.

\bibitem{MXResolving} X. Ma, T. Zhao, Z. Huang, {\it Resolving subcategories of triangulated categories and relative homological dimension}, Acta Math. Sin. (Engl. Ser.) {\bf 33}(11) (2017), 1513--1535.

\bibitem{MZ21R} X. Ma, T. Zhao, {\it Resolving resolution dimensions in triangulated categories}, Open Math. {\bf 19} (2021), 121--143.

\bibitem{Na} H. Nakaoka, Y. Palu, {\it Extriangulated categories, Hovey twin cotorsion pairs and model structures}, Cah. Topol. G\'{e}om. Diff\'{e}r. Cat\'{e}g. {\bf 60}(2) (2019), 117--193.

\bibitem{PC14H} C. Psaroudakis, {\it Homological theory of recollements of abelian categories}, J. Algebra {\bf 398} (2014), 63--110.

\bibitem{WWZ20R} L. Wang, J. Wei, H. Zhang, {\it Recollements of extriangulated categories}, Colloq. Math. {\bf 167} (2022), 239--259.

\bibitem{ZHJ} H. Zhang, X. Zhu, {\it Gorenstein global dimension of recollements of abelian categories}. Comm. Algebra {\bf 391} (2020), 467--483.

\bibitem{ZHJ1} H. Zhang, X. Zhu, {\it Resolving resolution dimension of recollements of abelian categories}. J. Algebra Appl. {\bf 20} (2021), 2150179.

\bibitem{ZhYY} Y. Zhang, {\it Gluing support $\tau$-tilting modules via symmetric ladders of height 2}, arXiv: 2105.09636.


\bibitem{ZJL} J. Zheng, X. Ma, Z. Huang, {\it The extension dimension of abelian categories}. { Algebr. Represent. Theory} {\bf 23}(3) (2020), 693--713.


\bibitem{ZZ20T} B. Zhu, X. Zhuang, {\it Tilting subcategories in extriangulated categories}, Front. Math. China {\bf 15}(1) (2020), 225--253.
\bibitem{ZZ21G} B. Zhu, X. Zhuang, {\it Grothendieck groups in extriangulated categories}, J. Algebra {\bf 574} (2021), 206--232.
\bibitem{ZXS13R} X. Zhu, {\it Resolving resolution dimensions}, Algebr. Represent. Theor. {\bf 16} (2013), 1165--1191.

\end{thebibliography}
\end{document}